\documentclass[12pt]{amsart}
\usepackage{amsmath, amssymb, latexsym, amsthm, amscd, mathrsfs, stmaryrd}
\usepackage{amsfonts, mathrsfs}
\usepackage[title, titletoc, toc]{appendix}
\usepackage[all]{xy}
\usepackage{soul}
\usepackage{tikz}
\usetikzlibrary{arrows}
\usetikzlibrary{calc}
\usepackage{pgfplots}
\pgfplotsset{compat=1.17}
\usepackage{cite}
\usepackage{enumerate}
\usepackage{tikz-cd}
\usepackage{yhmath}

\setlength{\hoffset}{0pt}
\setlength{\voffset}{0pt}
\setlength{\topmargin}{0pt}
\setlength{\oddsidemargin}{0in}
\setlength{\evensidemargin}{0in}
\setlength{\textheight}{8.75in}
\setlength{\textwidth}{6.5in}
\pagestyle{headings}

\newtheoremstyle{noparens}%
{}{}%
{\itshape}{}%
{\bfseries}{.}%
{ }%
{\thmname{#1}\thmnumber{ #2}\mdseries\thmnote{ #3}}
\theoremstyle{noparens}
\newtheorem{Def}[subsubsection]{Definition}
\newtheorem{prop}[subsubsection]{Proposition}
\newtheorem{thm}[subsubsection]{Theorem}

\newcommand{\Hom}{\mathrm{Hom}}
\newcommand{\mbf}{\mathbf}

\newcommand{\mbb}{\mathbb}
\newcommand{\mrm}{\mathrm}

\newcommand{\ro}{\mrm{ro}}
\newcommand{\co}{\mrm{co}}

\newcommand{\End}{\mrm{End}}

\usepackage{color}
\newcommand{\nc}{\newcommand}
\nc{\redtext}[1]{\textcolor{red}{#1}}
\nc{\bluetext}[1]{\textcolor{blue}{#1}}
\nc{\greentext}[1]{\textcolor{green}{#1}}
\nc{\yl}[1]{\redtext{From yq: #1}}
\nc{\zb}[1]{\redtext{From zb: #1}}

\title{Mirabolic Howe duality}
\author{Zhaobing Fan}
\address{School of Mathematical Sciences\\ Harbin Engineering University, Harbin, China 150001}
\email{fanzhaobing@hrbeu.edu.cn}
\author{Haitao Ma}
\address{School of Mathematical Sciences\\ Harbin Engineering University, Harbin, China 150001}
\email{hmamath@hrbeu.edu.cn}
\author{Zhicheng Zhang}
\address{School of Mathematical Sciences\\ Harbin Engineering University, Harbin, China 150001}
\email{zhichengzhang@hrbeu.edu.cn}
\thanks{These authors contributed equally to this work.}

\allowdisplaybreaks
\begin{document}
	
	\begin{abstract}
		We establish a duality between a pair of mirabolic quantum groups, i.e., the mirabolic counterpart of quantum Howe duality.
	\end{abstract}
	\maketitle
	
	\section{Introduction}\label{sec1}
	
	Schur-Weyl duality establishes an equivalence between the category of finite-dimensional representations of the symmetric group $S_d$ and that of $GL_n(\mbb{C})$, where $n\geq d$. The dualities for $Sp_{2n}(\mbb{C})$ and $O_n(\mathbb{C})$, were provided in \cite{DDH08}, \cite{D04}, where the corresponding algebra is Brauer algebra. For an affine generalization, see \cite{F21}.
	The duality was also extended to the quantum version by Drinfel'd and Jimbo \cite{D85}, \cite{J86}. Concurrently, the classical work \cite{BLM} presented a geometric construction of quantum groups via the variety of pairs of partial flags, known as the BLM realization. This approach yielded a geometric realization of quantum Schur-Weyl duality for type $A$ \cite{GL}. In \cite{BKLW}, \cite{FL}, the authors generalized BLM realization to $B, C$ and $D$ type partial flag varieties to construct the $\imath$quantum groups and established the Schur-Weyl duality involving $\imath$quantum group. Pouchin demonstrated the Schur-Weyl duality geometrically in a general setting \cite{P}.
	
	The classical Howe duality describes the classical invariant theory with representation-theoretical terms. In the realm of quantum algebra, a parallel development has taken place over the years. Quesen initiated the exploration of quantum Howe duality by demonstrating that the pair  $(U_q(\mathfrak{su}_3), U_q(\mathfrak{u}_2))$ forms a dual pair \cite{Q}. Subsequently, Zhang extended this insight, utilizing the quantum coordinate algebra to establish the quantum Howe duality between $U_q(\mathfrak{gl}_n)$ and $U_q(\mathfrak{gl}_m)$ \cite{Z}. This methodology was further employed to establish the quantum Howe duality between $U_q(\mathfrak{gl}_n)$ and $U_q(\mathfrak{so}_{2n})$, $U_q(\mathfrak{so}_{2n+1})$, $U_q(\mathfrak{sp}_{2n})$, respectively \cite{LZ}. On the other hand, Noumi, Umeda and Wakayama obtained the dual pairs $(U'_q(\mathfrak{so}_{n}), U_q(\mathfrak{sl}_2))$ and $(U'_q(\mathfrak{so}_{n}), U_q(\mathfrak{sp}_2))$, \cite{NUW95}, \cite{NUW96}, where $U'_q(\mathfrak{so}_{n})$ is the $q$-deformation of $U(\mathfrak{so}_n)$ defined by Gavrilik and Klimyk \cite{GK}. On the other hand, Futorny, K\v ri\v zka and Jian Zhang also obtained the dual pairs $(U_q(\mathfrak{sl}_2), U'_{q}(\mathfrak{so}_3))$ and $(U_q(\mathfrak{sl}_2), U_q(\mathfrak{sl}_n))$ by a different approach \cite{FKZ}. Moreover, the quantum skew Howe duality for the $\imath$quantum groups was also established in \cite{ES}, \cite{ST}.
	
	Inspired by BLM realization and its generalizations, Baumann presented a geometric approach of the quantum Howe duality of type $A$ \cite{Ba}(another approach see \cite{W}), Luo and Xu proved the geometric Howe duality of the finite types $A, B, C$, which are coincident with algebraic approach of type $A$ given by \cite{Z} and of type $B/C$ provided by themselves \cite{LX}.
	
	Another direction of generalization of \cite{BLM} is mirabolic setting, which considers the triple of two partial flags and a vector in $V$. The algebra arising from this setting is called mirabolic quantum group.
	In \cite{FZM}, we gave a geometric approach of the duality between the mirabolic quantum group and mirabolic Hecke algebra, called mirabolic Schur-Weyl duality. 
	This work rises a natural question
	\begin{itemize}
		\item Can we achieve the duality between a pair of mirabolic quantum groups, i.e., the mirabolic analogy of Howe duality.
	\end{itemize}
	
	We answer this question in this paper. We prove that the pair of mirabolic quantum groups forms a dual pair based on the geometric approach of mirabolic Schur-Weyl duality. In particular, we provide explicit action formulas of the mirabolic quantum group, which are induced by the action of the mirabolic q-Schur algebras.
	
	This paper is organized as follows. In Section 2, we recall some results in \cite{FZM}. In Section 3, we prove the mirabolic Howe duality.
	
	\section{Quantum Mirabolic Schur-Weyl Duality}
	\subsection{\ }
	
	Let $\mathbb{F}_q$ be a finite field with $q$ elements, $\mathbb{K}$ be any algebraic closed field of characteristic $0$ and $v$ is an indeterminate. Take $V=\mathbb{F}^d_q$ be a vector space of finite dimension $d$ and $G=GL_d(\mathbb{F}_q)$.
	For a fixed positive integer $n$, let $\mathscr{X}_{n,d}$ be the set of all $n$-step partial flags in $V$, i.e., 
	$$
	\mathscr{X}_{n,d}=\{f=(0=V_0\subset V_1\subset \cdots \subset V_{n-1}\subset V_n=V)\}.
	$$
	Similarly, let $\mathscr{Y}$ be the set of all complete flags in $V$, i.e.,
	$$\mathscr{Y}=\{f=(0=V_0\subset V_1\subset \cdots \subset V_{d-1}\subset V_d=V) | \dim{V_{i}/V_{i-1}}=1 \}.$$ 
	
	Let $\mathcal{MS}_{n, d}=\mathbb{K}(v)_{G}(\mathscr{X}_{n,d}\times \mathscr{X}_{n,d}\times V)$ be the space of $G$-invariant $\mathbb{K}(v)$-valued functions on $\mathscr{X}_{n,d}\times \mathscr{X}_{n,d}\times V$. Endow $\mathcal{MS}_{n, d}$ an associative algebra structure by defining convolution product as follows.
	For any $g,h\in \mathcal{MS}_{n, d}$,
	$$
	h\ast g(f,f',v)=\sum_{(f'',v)\in \mathscr{X}_{n,d}\times V}h(f,f'',w)g(f'',f',v-w)
	.$$
	Similarly, we obtain a convolution algebra $\mathcal{MH}=\mathbb{K}(v){G}(\mathscr{Y}\times \mathscr{Y}\times V)$, which is isomorphic to the mirabolic Hecke algebra (see \cite{S} and \cite{R14}), and a space $\mathcal{MV}_{n|d}=\mathbb{K}(v)_{G}(\mathscr{X}_{n,d}\times \mathscr{Y}\times V)$. Then the space $\mathcal{MV}_{n, d}$ has a natural left $\mathcal{MS}_{n,d}$-action and right $\mathcal{MH}$-action by convolution product.
	
	In \cite{FZM}, we established the duality between $\mathcal{MS}_{n,d}$ and $\mathcal{MH}$ as follows.
	\begin{thm}\label{msw-s}
		For $n\geq d$, we have the following double centralizer property,
		\begin{align*}
			\mathcal{MS}_{n,d}\cong \End_{\mathcal{MH}}{\mathcal{MV}_{n,d}}, \mathcal{MH}\cong \End_{\mathcal{MS}_{n,d}}{\mathcal{MV}_{n,d}}.
		\end{align*}
	\end{thm}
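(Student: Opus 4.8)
The plan is to exhibit the whole statement as a Morita-type double centralizer attached to a single idempotent, with the bound $n\ge d$ entering exactly through the fullness of that idempotent. First I would record that the two actions commute: associativity of the convolution product shows that the left $\mathcal{MS}_{n,d}$-action and the right $\mathcal{MH}$-action on $\mathcal{MV}_{n,d}$ commute, producing natural maps
$$\Phi\colon \mathcal{MS}_{n,d}\to \End_{\mathcal{MH}}(\mathcal{MV}_{n,d}),\qquad \Psi\colon \mathcal{MH}\to \End_{\mathcal{MS}_{n,d}}(\mathcal{MV}_{n,d}),$$
and the task is to prove that both are isomorphisms.

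Next I would introduce an idempotent encoding the refinement $\mathscr{Y}\hookrightarrow\mathscr{X}_{n,d}$. Since $n\ge d$, every complete flag becomes an $n$-step flag after repeating subspaces, so $\mathscr{Y}\subseteq\mathscr{X}_{n,d}$; let $e\in\mathcal{MS}_{n,d}$ be the characteristic function of the $G$-orbits on $\{(f,f,0):f\in\mathscr{Y}\}$, which is an idempotent. Restricting supports then identifies $e\,\mathcal{MS}_{n,d}\,e\cong\mathbb{K}(v)_G(\mathscr{Y}\times\mathscr{Y}\times V)=\mathcal{MH}$ and $\mathcal{MS}_{n,d}\,e\cong\mathbb{K}(v)_G(\mathscr{X}_{n,d}\times\mathscr{Y}\times V)=\mathcal{MV}_{n,d}$ as $(\mathcal{MS}_{n,d},\mathcal{MH})$-bimodules. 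With this in hand the second centralizer equality is formal: for the projective left module $\mathcal{MS}_{n,d}\,e$ one has $\End_{\mathcal{MS}_{n,d}}(\mathcal{MS}_{n,d}\,e)\cong e\,\mathcal{MS}_{n,d}\,e$, and combined with the transpose anti-involution of $\mathcal{MH}$ this yields $\mathcal{MH}\cong\End_{\mathcal{MS}_{n,d}}(\mathcal{MV}_{n,d})$, so $\Psi$ is an isomorphism.

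The substantive point is the first equality, which I would reduce to the fullness $\mathcal{MS}_{n,d}\,e\,\mathcal{MS}_{n,d}=\mathcal{MS}_{n,d}$. Granting this, $e$ is a full idempotent, so $\mathcal{MS}_{n,d}$ is Morita equivalent to $e\,\mathcal{MS}_{n,d}\,e=\mathcal{MH}$ via the progenerator $\mathcal{MS}_{n,d}\,e=\mathcal{MV}_{n,d}$, whence $\mathcal{MS}_{n,d}\cong\End_{\mathcal{MH}}(\mathcal{MV}_{n,d})$ and $\Phi$ is an isomorphism. To prove fullness I would use the orbit basis of $\mathcal{MS}_{n,d}$ indexed by $G$-orbits on $\mathscr{X}_{n,d}\times\mathscr{X}_{n,d}\times V$ (the mirabolic matrices) together with the decomposition $1=\sum_\lambda e_\lambda$ of the unit over composition types $\lambda$ of $d$ into $n$ parts, where $e$ gathers the types with all parts $\le 1$; it then suffices to write each $e_\lambda=x_\lambda\,e\,y_\lambda$ with $x_\lambda,y_\lambda\in\mathcal{MS}_{n,d}$, which is possible because $n\ge d$ guarantees that every $n$-step flag refines to a complete flag, and the geometric merge/split (divided-power) operators realize exactly such factorizations.

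The hard part will be this fullness step: I expect the main work to be checking that the merge and split operators remain inside $\mathcal{MS}_{n,d}$ and genuinely factor every orbit basis element through complete flags, while correctly transporting the extra vector coordinate $v$ of the mirabolic data. This is the only place where $n\ge d$ is essential, and it is precisely what makes the complete-flag idempotent $e$ full.
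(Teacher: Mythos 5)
Your argument is sound, but note first that this paper does not actually prove Theorem \ref{msw-s}: it is imported verbatim from \cite{FZM} (``In \cite{FZM}, we established the duality\dots''), so the only proof you can compare against is the one used there and echoed in the proof of Theorem \ref{mhd} here. That argument is quite different in flavor from yours: it passes through Rosso's identifications $\mathcal{MH}\cong e_B\mathbb{K}(v)[G']e_B$ and $\mathcal{MV}_{n,d}\cong\bigoplus_{\mathbf a}e_{P^{\mathbf a}}\mathbb{K}(v)[G']e_B$ for $G'=G\ltimes V$, uses the relations $e_{P^{\mathbf a}}e_B=e_Be_{P^{\mathbf a}}=e_{P^{\mathbf a}}$ inside the group algebra, and then leans on semisimplicity of $\mathcal{MH}$ (from \cite{R14}) to get the double centralizer from a Wedderburn decomposition of the bimodule. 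Your route instead locates the idempotent inside $\mathcal{MS}_{n,d}$ itself (the complete-flag type $\omega=(1^d,0^{n-d})$) and runs Morita theory: $\End_{\mathcal{MS}}(\mathcal{MS}e)\cong e\mathcal{MS}e\cong\mathcal{MH}$ formally, and $\mathcal{MS}\cong\End_{\mathcal{MH}}(\mathcal{MS}e)$ from fullness $\mathcal{MS}e\mathcal{MS}=\mathcal{MS}$. What your approach buys is independence from the semisimplicity of $\mathcal{MH}$, hence potential validity over more general coefficient rings (wherever the relevant Poincar\'e polynomials are invertible); what the paper's approach buys is that fullness never has to be verified, since semisimplicity plus matching of simple constituents does all the work. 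One refinement to your own sketch: the step you flag as hard --- checking that the merge/split factorization of each $e_\lambda$ through $e_\omega$ correctly ``transports the vector coordinate'' --- essentially evaporates, because $e_\lambda$, $e_\omega$ and the merge/split elements are all supported on the locus $v=0$ (the $\Delta=\emptyset$ diagonal orbits), and the functions supported there form a subalgebra of $\mathcal{MS}_{n,d}$ isomorphic to the ordinary $q$-Schur algebra $S_q(n,d)$; so fullness of $e$ in $\mathcal{MS}_{n,d}$ reduces to the classical fact that $e_\lambda\,(\text{split})\,e_\omega\,(\text{merge})\,e_\lambda=P_\lambda(v)\,e_\lambda$ with $P_\lambda(v)\neq 0$ in $\mathbb{K}(v)$, which is exactly where $n\geq d$ and the genericity of $v$ enter.
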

	
	Recall the mirabolic quantum group $\mathbf{MU}_n$ introduced in \cite{FZM}.
	\begin{Def}
		Fix a positive integer $n$. The algebra $\mathbf{MU}_n$ is a $\mathbb{K}(v)$-algebra generated by $\mathbf{E}_i, \mathbf{F}_i, \mathbf{H}^{\pm}_a, \mathbf{L}$ for $i\in [1, n-1]$ and $a\in [1, n]$, with the following relations.
		\begin{align*}
			& \mathbf{H}_{a}\mathbf{H}^{-1}_{a}=1, \mathbf{H}_{a}\mathbf{H}_{b}=\mathbf{H}_{b} \mathbf{H}_{a};\\
			& \mathbf{E}_{i}^2\mathbf{E}_{j}+\mathbf{E}_{j}\mathbf{E}_{i}^2=(v+v^{-1})\mathbf{E}_{i}\mathbf{E}_{j}\mathbf{E}_{i} \quad |i-j|=1;\\
			& \mathbf{F}_{i}^{2}\mathbf{F}_{j}+\mathbf{F}_{j}\mathbf{F}_{i}^2=(v+v^{-1})\mathbf{F}_{i}\mathbf{F}_{j}\mathbf{F}_{i} \quad |i-j|=1;\\
			& \mathbf{E}_{i}\mathbf{E}_{j}=\mathbf{E}_{j}\mathbf{E}_{i},  \mathbf{F}_{i}\mathbf{F}_{j}=\mathbf{F}_{j}\mathbf{F}_{i} \quad |i-j|>1;\\
			& \mathbf{H}_{a}\mathbf{E}_{i}=v^{\delta_{a i}-\delta_{a, i+1}}\mathbf{E}_{i}\mathbf{H}_{a};\\
			& \mathbf{H}_{a}\mathbf{F}_{i}=v^{-\delta_{a i}+\delta_{a, i+1}}\mathbf{F}_{i}\mathbf{H}_{a};\\
			& \mathbf{E}_{i}\mathbf{F}_{j}-\mathbf{F}_{j}\mathbf{E}_{i}=\delta_{i, j}\frac{\mathbf{H}_{i}\mathbf{H}^{-1}_{i+1}-\mathbf{H}^{-1}_{i}\mathbf{H}_{i+1}}{(v-v^{-1})};\\
			& \mathbf{H}_{a}\mathbf{L}=\mathbf{L}\mathbf{H}_{a}, \mathbf{L}^2=\mathbf{L};\\
			& \mathbf{L}\mathbf{E}_{i}=\mathbf{L}\mathbf{E}_{i}\mathbf{L}, \mathbf{L}\mathbf{F}_{i}=\mathbf{L}\mathbf{F}_{i}\mathbf{L};\\
			& \frac{v^2-v^{-2}}{v-v^{-1}} \mathbf{E}_{i}\mathbf{L}\mathbf{E}_{i}=v^{-1}\mathbf{E}_{i}^{2}\mathbf{L}+v\mathbf{L}\mathbf{E}_{i}^{2};\\
			& \frac{v^2-v^{-2}}{v-v^{-1}}\mathbf{F}_{i}\mathbf{L}\mathbf{F}_{i}=v\mathbf{F}_{i}^{2}\mathbf{L}+v^{-1}\mathbf{L}\mathbf{F}_{i}^{2}.
		\end{align*}
	\end{Def}
	
	There is a surjective algebra homomorphism $\kappa_n: \mathbf{MU}_n \to \mathcal{MS}_{n,d}$, and we prove the following result \cite{FZM}.
	\begin{thm}\label{msw-q}
		For $n\geq d$, we have the following double centralizer property,
		\begin{align*}
			\mathbf{MU}_{n}\to \End_{\mathcal{MH}}{\mathcal{MV}_{n,d}}\text{ is a surjective, }\mathcal{MH}\cong \End_{\mathbf{MU}_{n}}{\mathcal{MV}_{n,d}}.
		\end{align*}
	\end{thm}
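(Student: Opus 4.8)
The plan is to deduce Theorem \ref{msw-q} from Theorem \ref{msw-s} together with the surjectivity of $\kappa_n$, the key observation being that the $\mathbf{MU}_n$-action on $\mathcal{MV}_{n,d}$ is nothing but the $\mathcal{MS}_{n,d}$-action pulled back along $\kappa_n$. First I would record this compatibility explicitly: for every $u\in\mathbf{MU}_n$ and every $x\in\mathcal{MV}_{n,d}$ one has $u\cdot x=\kappa_n(u)\ast x$, so that the image of $\mathbf{MU}_n$ inside the $\mathbb{K}(v)$-linear endomorphisms of $\mathcal{MV}_{n,d}$ coincides with the image of $\mathcal{MS}_{n,d}$. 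This is the only place where the explicit action formulas for the generators $\mathbf{E}_i,\mathbf{F}_i,\mathbf{H}^{\pm}_a,\mathbf{L}$ enter; once it is in place, the remainder is formal.

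For the first assertion I would consider the composite $\mathbf{MU}_n\xrightarrow{\kappa_n}\mathcal{MS}_{n,d}\xrightarrow{\sim}\End_{\mathcal{MH}}(\mathcal{MV}_{n,d})$, where the second arrow is the isomorphism supplied by Theorem \ref{msw-s} in the range $n\geq d$. Since $\kappa_n$ is surjective and the second map is an isomorphism, the composite is surjective; by the compatibility recorded above this composite is exactly the structure map $\mathbf{MU}_n\to\End_{\mathcal{MH}}(\mathcal{MV}_{n,d})$, which yields the claimed surjectivity.

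For the second assertion I would show that the two commutants agree. Because $\kappa_n$ is surjective, a $\mathbb{K}(v)$-linear endomorphism $\phi$ of $\mathcal{MV}_{n,d}$ commutes with the action of every element of $\mathbf{MU}_n$ if and only if it commutes with the action of every element of $\mathcal{MS}_{n,d}=\kappa_n(\mathbf{MU}_n)$. Hence $\End_{\mathbf{MU}_n}(\mathcal{MV}_{n,d})=\End_{\mathcal{MS}_{n,d}}(\mathcal{MV}_{n,d})$, and the right-hand side is isomorphic to $\mathcal{MH}$ by Theorem \ref{msw-s}. Combining the two parts completes the proof.

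With both the surjectivity of $\kappa_n$ and Theorem \ref{msw-s} available, the double-centralizer transfer above is purely formal, so I do not expect any obstacle at this stage. The genuinely difficult input lies upstream in the construction of $\kappa_n$ itself: one must produce explicit convolution formulas realizing each generator of $\mathbf{MU}_n$ as an element of $\mathcal{MS}_{n,d}$, verify the full list of defining relations (in particular the mirabolic relations involving $\mathbf{L}$, such as $\frac{v^2-v^{-2}}{v-v^{-1}}\mathbf{E}_i\mathbf{L}\mathbf{E}_i=v^{-1}\mathbf{E}_i^2\mathbf{L}+v\mathbf{L}\mathbf{E}_i^2$), and confirm that these elements generate $\mathcal{MS}_{n,d}$ as an algebra. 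That is where essentially all of the work resides, and it is precisely the content imported from \cite{FZM}.
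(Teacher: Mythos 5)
Your proposal is correct and matches how the result is actually obtained: the paper itself does not reprove Theorem \ref{msw-q} but imports it from \cite{FZM}, where it is exactly the formal consequence you describe of the surjectivity of $\kappa_n$ together with the double centralizer property $\mathcal{MS}_{n,d}\cong\End_{\mathcal{MH}}(\mathcal{MV}_{n,d})$ and $\mathcal{MH}\cong\End_{\mathcal{MS}_{n,d}}(\mathcal{MV}_{n,d})$ of Theorem \ref{msw-s}. You also correctly locate the genuine content upstream, in the construction of $\kappa_n$, the verification of the mirabolic relations, and the generation statement for $\mathcal{MS}_{n,d}$.
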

	
	\section{Mirabolic Howe duality}
	\subsection{Space $\mathcal{MV}_{n|m}$}
	
	Fix two positive integers $n\geq m$. Consider the space of $G$-invariant $\mathbb{K}(v)$-valued functions on $\mathscr{X}_{n,d}\times \mathscr{X}_{m,d}\times V$, which is denoted by $\mathcal{MV}_{n|m}$.
	
	\begin{Def}\label{def 3.1.1}
		A decorated matrix is a pair $(A, \Delta)$, where $A=(a_{i j})\in Mat_{n\times m}(\mathbb{N})$ 
		and $\Delta =\{(i_1, j_1), \cdots, (i_k, j_k)\}$ is a set that satisfies
		$$
		1\leq  i_1< \cdots < i_k \leq  n , \ 1\leq j_k< \cdots< j_1 \leq m, 
		$$
		with the additional condition that for all $(i, j)\in \Delta$, the entry $a_{i j}>0$. We denote by $\Xi_{n|m}$ the set of decorated matrices and $\Xi_{n|m,d}=\{(A, \Delta)\in \Xi_{n|m} | \sum_{i j}a_{i, j}=d \}$.
	\end{Def}
	
	There is a bijection between $G$-orbits on $\mathscr{X}_{n,d}\times \mathscr{X}_{m,d}\times V$ and $\Xi_{n|m,d}$ \cite{MWZ}.
	The orbit that corresponding to $(A,\Delta)\in \Xi_{n|m,d}$ is denoted by $\mathcal{O}_{A,\Delta}$ and $e_{A,\Delta}$ is its characteristic function. 
	
	For any $(A,\Delta)\in \Xi_{n|m, d}$, define $[A]_{\Delta}=v^{-d(A,\Delta)+r(A,\Delta)}e_{A,\Delta}$, where 
	$$-d(A,\Delta)+r(A,\Delta)=-\sum\limits_{i<k\, or\,j< l}a_{i j}a_{kl}-\sum\limits_{\{(i, j)\}\leq \Delta} a_{i j}$$ 
	and $\{(i, j)\}\leq \Delta$ means there is $(k,l)\in \Delta$ such that $i\leq k$ and $j\leq l$.
	Then the set $\{[A]_\Delta\ |\ (A,\Delta)\in \Xi_{n|m,d}\}$ form a basis of $\mathcal{MV}_{n|m}$.
	
	\begin{prop}
		The dimension of $\mathcal{MV}_{n|m}$ is
		$$
		\sum^{\min\{m,d\}}\limits_{l=0}\binom{m}{l}\binom{n}{l}\binom{nm+d-1-l}{d-l}.
		$$
	\end{prop}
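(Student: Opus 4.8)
The plan is to compute the dimension as the cardinality of the index set, since $\{[A]_\Delta \mid (A,\Delta)\in \Xi_{n|m,d}\}$ is a basis of $\mathcal{MV}_{n|m}$; thus it suffices to count the decorated matrices in $\Xi_{n|m,d}$. I would stratify $\Xi_{n|m,d}$ according to the cardinality $l=|\Delta|$ of the decoration and count the two independent pieces of data, namely the set $\Delta$ and the matrix $A$, separately for each fixed $l$.

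First I would count the admissible decorations of size $l$. A set $\Delta=\{(i_1,j_1),\dots,(i_l,j_l)\}$ is required to satisfy $1\le i_1<\cdots<i_l\le n$ and $1\le j_l<\cdots<j_1\le m$. I claim such a $\Delta$ is determined precisely by an unordered choice of $l$ row indices from $[1,n]$ together with an unordered choice of $l$ column indices from $[1,m]$: once these two $l$-subsets are fixed, the strict monotonicity conditions force the rows to be listed in increasing order and the columns in decreasing order, so the pairing is unique, the smallest row being matched with the largest column and so on. Hence there are exactly $\binom{n}{l}\binom{m}{l}$ admissible decorations of size $l$.

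Next, for a fixed $\Delta$ of size $l$, I would count the matrices $A=(a_{ij})\in Mat_{n\times m}(\mathbb{N})$ with $\sum_{ij}a_{ij}=d$ subject to $a_{ij}>0$ for the $l$ marked positions $(i,j)\in\Delta$. Writing $a_{ij}=a'_{ij}+1$ at the $l$ marked cells and $a_{ij}=a'_{ij}$ elsewhere turns this into counting arbitrary tuples of $nm$ nonnegative integers with total sum $d-l$; by the standard stars-and-bars formula the number of such tuples is $\binom{nm+d-l-1}{d-l}$. Importantly, this count is independent of which particular $\Delta$ of size $l$ we chose, so the two factors multiply.

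Summing the product $\binom{n}{l}\binom{m}{l}\binom{nm+d-l-1}{d-l}$ over all possible sizes $l$ then yields the dimension. The size of $\Delta$ cannot exceed the number of available rows or columns, nor can it exceed $d$, since each marked entry contributes at least $1$ to the total $d$; because $n\ge m$ the effective range is $0\le l\le\min\{m,d\}$, and terms outside this range vanish automatically as the relevant binomial coefficients are zero. This gives the asserted formula. The only genuinely delicate point is the uniqueness of the pairing in the decoration count, which rests entirely on the opposite monotonicity of the row and column sequences.
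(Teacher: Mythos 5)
Your proof is correct and follows essentially the same route as the paper's: stratify $\Xi_{n|m,d}$ by $l=\sharp\Delta$, count decorations as $\binom{n}{l}\binom{m}{l}$, and count the compatible matrices by stars and bars to get $\binom{nm+d-l-1}{d-l}$. Your write-up is in fact more careful than the paper's (which compresses the matrix count into one sentence), in particular in spelling out why the pairing of row and column indices in $\Delta$ is forced by the opposite monotonicity conditions.
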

	\begin{proof}
		For any fixed integer $\min\{m,d\}\geq l\geq 0$, we define subset $X_l=\{[A]_{\Delta}\ |\ \sharp \Delta=l\}\subset \mathcal{MV}_{n|m}$, then we have 
		$$\dim \mathcal{MV}_{n|m}=\sum^{m}\limits_{l=0}\sharp X_l.$$
		
		For a fixed $l$, the $\sharp X_l$ is actually to count the number of nm-step partitions of $d$ such that there are at least $l$-th entries are non-zero, and they lie in different columns and rows. Therefore, we have 
		$$
		\sharp X_l=\binom{m}{l}\binom{n}{l}\binom{nm+d-l-1}{d-l},
		$$
		and the dimension of $\mathcal{MV}_{n|m}$ is as desire.
	\end{proof}
	\subsection{Explicit action}
	
	Similar with $\mathcal{MV}_{n|m}$, the set $\{[A]_\Delta\ |\ (A,\Delta)\in \Xi_{n|n,d}\}$ form a basis of $\mathcal{MS}_{n,d}$.
	For $i\in [1, n-1]$ and $a\in [1,n]$, we define
	$$E_i=\sum[B]_{\emptyset},\ F_i=\sum[C]_{\emptyset},\ H_a^{\pm}=\sum v^{\mp d_{a, a}}[D]_{\emptyset},\  L=\sum v^{-2d_{1,1}}[D]_{\emptyset}+\sum v^{-d'_{1,1}}[D']_{\{(1,1)\}},$$
	where $(B,\emptyset)$, $(C,\emptyset)$ $(D,\emptyset)$ and $(D',\{(1,1)\})$ run over all matrices in $\Xi_{n|n,d}$ such that $D$, $D'$, $B-E_{i,i+1}$, and $C-E_{i+1,i}$ are diagonal matrices, respectively. Then we have the following proposition in \cite{FZM}.
	
	\begin{prop}\label{prop gnrt}
		The elements $E_i, F_i, H^{\pm}_{a}$ and $L$ are the generators of $\mathcal{MS}_{n,d}$, where $i\in [1, n-1]$ and $a\in [1, n]$.
	\end{prop}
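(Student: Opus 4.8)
The plan is to adapt the Beilinson–Lusztig–MacPherson inductive scheme to the decorated basis $\{[A]_\Delta\}$, the only genuinely new ingredient being the decoration-creating generator $L$. The overall structure is a double induction: first produce all undecorated $[A]_\emptyset$ from $E_i, F_i, H_a^{\pm}$ exactly as in the geometric $q$-Schur algebra, then build up the decorations one at a time using $L$.

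First I would record the relevant multiplication formulas, i.e.\ the structure constants for convolving each proposed generator against an arbitrary basis element $[A]_\Delta$. As in the undecorated case, $H_a^{\pm}$ acts diagonally by a power of $v$, while $E_i \ast [A]_\Delta$ (resp.\ $F_i \ast [A]_\Delta$) is a $v$-weighted sum over the basis elements obtained from $(A,\Delta)$ by moving one box from row $i+1$ to row $i$ (resp.\ from $i$ to $i+1$) in some column, with the analogous statements for right multiplication acting on columns; and $L \ast [A]_\Delta$ either rescales by a power of $v$ or attaches a decoration at $(1,1)$, the latter term occurring precisely when $a_{1,1}>0$ so that the staircase condition of Definition~\ref{def 3.1.1} is respected. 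I would either extract these from the convolution computations in \cite{FZM} or derive them directly from the orbit geometry.

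Second, I would show that every undecorated $[A]_\emptyset$ lies in the subalgebra generated by $E_i, F_i, H_a^{\pm}$. One checks that the span of the undecorated basis elements is closed under convolution (moving boxes never creates a decoration) and is identified with the ordinary geometric $q$-Schur algebra $\mathcal{S}_{n,d}$, so the classical generation statement applies: the diagonal $[D]_\emptyset$ are $\mathbb{K}(v)$-combinations of products of the $H_a^{\pm}$, and an off-diagonal $[A]_\emptyset$ is produced by induction on $d(A,\emptyset)=\sum_{i<k\,\text{or}\,j<l}a_{ij}a_{kl}$, appearing as the unique leading term of a suitable product $E_i \ast [A']_\emptyset$ or $F_i \ast [A']_\emptyset$, all remaining terms having strictly smaller statistic and hence lying in the subalgebra by the inductive hypothesis.

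Third, I would incorporate $L$ to reach the decorated elements. The generator $L$ supplies the single-decoration elements $[D']_{\{(1,1)\}}$ for diagonal $D'$; multiplying these on either side by suitable $E_i, F_i$ — and using the relations $\mathbf{L}\mathbf{E}_i=\mathbf{L}\mathbf{E}_i\mathbf{L}$, $\mathbf{L}\mathbf{F}_i=\mathbf{L}\mathbf{F}_i\mathbf{L}$ to track how the decoration is carried along — transports the decoration to an arbitrary admissible position $(i,j)$ and yields every $[A]_{\{(i,j)\}}$ as a leading term modulo lower ones. Iterating, each successive decoration is created by a further multiplication by a transported copy of $L$, the constraints $i_1<\cdots<i_k$ and $j_1>\cdots>j_k$ dictating the order in which the $k$ decorations are assembled; a second induction on $|\Delta|$, refined by the statistic above, then produces all $[A]_\Delta$. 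I expect this third step to be the crux: one must pin down a partial order on $\Xi_{n|n,d}$ simultaneously compatible with the box-moving action of $E_i, F_i$ and the decoration-creating action of $L$, and verify that the targeted $[A]_\Delta$ really occurs as the unique leading term with invertible coefficient while every correction term is strictly lower. Making the bookkeeping of matrix entries and decoration sets close the double induction, rather than any single multiplication formula, is the main obstacle.
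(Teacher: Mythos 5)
The paper does not actually prove this proposition: it is imported from \cite{FZM} (``Then we have the following proposition in [FZM]''), so there is no in-paper argument to measure yours against. Judged on its own terms, your first two steps are sound. The span of the undecorated basis elements $[A]_\emptyset$ is exactly the space of functions supported on $\mathscr{X}_{n,d}\times\mathscr{X}_{n,d}\times\{0\}$, hence a convolution subalgebra isomorphic to the ordinary geometric $q$-Schur algebra, and the classical BLM induction on $d(A,\emptyset)$ does produce every $[A]_\emptyset$ from $E_i$, $F_i$, $H_a^{\pm}$.

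The gap is in your third step, which is precisely the one you flag as ``the crux'' and then do not carry out. Two concrete problems. First, your description of $L$ is inaccurate: by Proposition~\ref{prop 2.2.1}(d), $L\ast[A]_\emptyset$ is not ``a rescaling or a decoration attached at $(1,1)$'' but the sum $\sum_{t:\,a_{1,t}>0}v^{\cdots}[A]_{\{(1,t)\}}$ plus an undecorated term, i.e.\ one decorated summand for \emph{every} nonzero entry of row $1$. Isolating a single $[A]_{\{(1,t)\}}$ therefore already requires an auxiliary argument (say, a further induction on the number of nonzero entries in row $1$, first treating matrices whose first row has one nonzero entry and then transporting with $E_i$, $F_i$), and nothing in your sketch supplies it; the $H_a^{\pm}$ cannot separate these summands, since they see only row sums, which are identical for all $t$. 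Second, the entire inductive engine --- the partial order on $\Xi_{n|n,d}$ compatible simultaneously with the box-moving of $E_i$, $F_i$ (which, per Proposition~\ref{prop 2.2.1}(f)--(h), also displaces decorations sitting in rows $h$, $h+1$) and with the decoration-creation of $L$, together with the claim that the target $[A]_\Delta$ occurs as the unique leading term with invertible coefficient --- is stated as a goal rather than established. Since that is the whole mathematical content of the proposition beyond the classical BLM case, what you have is a plausible plan, not a proof.
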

	
	For any two positive integers $N, t$, let $$\overline{[N, t]}_v=\prod_{i\in [1, t]}\frac{v^{-2(N-i+1)}-1}{v^{-2i}-1}.$$
	It is clear that $\mathcal{MV}_{n|m}$ has a left $\mathcal{MS}_{n,d}$-action $\Phi$ and a right $\mathcal{MS}_{m,d}$-action $\Psi$ by convolution product. Then we have the following propositions.
	
	\begin{prop}\label{prop 2.2.1}
		Assume $h\in [1, n-1]$ and $r\in [1,n]$. For $(A,\Delta)\in \Xi_{n|m,d}$ where $\Delta = \{(i_1, j_1), \cdots, (i_k, j_k)\}$, let $\beta_A(p)=\sum_{j\geq p}a_{h, j}-\sum_{j>p}a_{h+1, j}$ and $\beta_A'(p)=\sum_{j \leq p}a_{h+1, j}-\sum_{j<p}a_{h, j}$ for $p\in [1,m]$, then the left $\mathcal{MS}_{n,d}$-action is given explicitly as follows.
		
		\noindent $(a)$ For any $r\in [1,n]$, then 
		$$
		H^{\pm}_r\ast [A]_{\Delta}=v^{\mp \sum_{j\in [1,m]}a_{r,j}}[A]_{\Delta}.
		$$
		
		\noindent$(b)$ If $i_1>1$, then
		\begin{align*}
			\relax L\ast [A]_{\Delta}=v^{-2\sum_{j>j_1}a_{1,j}}([A]_{\Delta}+\sum_{t>j_1, a_{1, t}>0}v^{\sum_{j_1<j\leq t}a_{1,j}}[A]_{\Delta_t})
		\end{align*}
		where 
		\begin{align*}
			\Delta_t=\{(1, t), (i_1,j_1), \cdots, (i_k, j_k)\}.
		\end{align*}
		
		\noindent$(c)$ If $i_1=1$, then 
		\begin{align*}
			\relax L\ast [A]_{\Delta}=&v^{-2\sum_{j>j_1}a_{1,j}}(\sum_{t>j_2, a_{1, t}>0}v^{-\sum_{j\leq j_1}a_{1,j}+\sum_{j\leq t}a_{1,j}}(1-v^{-2a_{1,j_1}})[A]_{\Delta_t})\\
			&+v^{-2\sum_{j>j_1}a_{1,j}}(v^{-\sum_{j_2<j\leq j_1}a_{1,j}}(1-v^{-2a_{1, j_1}})[A]_{\Delta\setminus \{(i_1, j_1)\}})
		\end{align*}
		where \begin{align*}
			\Delta_t=\{(1, t), (i_2,j_2), \cdots, (i_{k}, j_{k})\}.
		\end{align*}
		
		\noindent$(d)$ If $\Delta=\emptyset$, then
		\begin{align*}
			\relax L\ast [A]_{\Delta}=\sum\limits_{t\in [1, m], a_{1, t}>0} v^{-\sum_{j\leq m}a_{1,j}-\sum_{j>t}a_{1,j}} [A]_{\{(1, t)\}}+v^{-2\sum_{j\leq m}a_{1, j}}[A]_{\emptyset}.
		\end{align*}
		
		\noindent$(e)$ If for any $t$, $i_t \neq h, h+1$, then
		$$E_h\ast [A]_\Delta = \sum\limits_{p\in[1, m], a_{h+1, p} \geq 1} v^{\beta_A(p)}\overline{[a_{h, p}+1, 1]}_v[A+\mathbf{E}_{h, p}-E_{h+1, p}]_{\Delta};
		$$
		$$F_h\ast [A]_\Delta = \sum\limits_{p\in[1, m], a_{h, p} \geq 1} v^{\beta_A'(p)}\overline{[a_{h+1, p}+1, 1]}_v[A+E_{h+1, p}-E_{h, p}]_{\Delta}.$$
		\noindent$(f)$ If there exists $l$ such that $i_l = h, i_{l+1} \neq h+1$, then
		\begin{align*}
			\relax E_h\ast [A]_{\Delta} = &
			\sum\limits_{p \in [j_{l+1}+1, j_l-1], a_{h+1, p} \geq 1}v^{\beta_A(p)-1}\overline{[a_{h, p}+1, 1]}_v[A+E_{h, p}-E_{h+1, p}]_{\Delta} \\
			& + \sum\limits_{p = j_l, a_{h+1, p} \geq 1} v^{\beta_A(p)-1}\overline{[a_{h, p}, 1]}_v[A+E_{h, p}-E_{h+1, p}]_{\Delta}\\
			&+ \sum\limits_{p \notin [j_{l+1}+1, j_l], a_{h+1, p} \geq 1}v^{\beta_A(p)}\overline{[a_{h, p}+1, 1]}_v[A+E_{h, p}-E_{h+1, p}]_{\Delta};
		\end{align*}
		\begin{align*}
			\relax F_h\ast [A]_{\Delta}= & \sum\limits_{i_{l+1}<p\leq i_l, a_{h, p} \geq 1} v^{\beta_A'(p)-1}\overline{[a_{h+1, p}+1, 1]}_v[A+E_{h+1, p}-E_{h, p}]_{\Delta} \\  
			& \sum\limits_{p\notin [i_{l+1}+1,i_l], a_{h, p} \geq 1} v^{\beta_A'(p)}\overline{[a_{h+1, p}+1, 1]}_v[A+E_{h+1, p}-E_{h, p}]_{\Delta} \\    
			& + \sum\limits_{p = j_l, a_{h, p} \geq 1}v^{\sum_{j \leq j_{l+1}}a_{h+1, j}-\sum_{j< j_{l}}a_{h, j}}[A+E_{h+1, p}-E_{h, p}]_{\Delta_{j_l}} \\
			& + \sum\limits_{j_{l+1}<p = t<j_{l}, a_{h, p} \geq 1}v^{\sum_{j \leq j_{l+1}}a_{h+1, j}-\sum_{j< t}a_{h, j}}[A+E_{h+1, p}-E_{h, p}]_{\Delta_t},
		\end{align*}
		where
		\begin{align*}
			\Delta_{j_l} &= \{(i_1, j_1), \cdots, (i_{l-1}, j_{l-1}), (h+1, j_l), (i_{l+1}, j_{l+1}), \cdots, (i_k, j_k)\}, \\
			\Delta_t &= \{(i_1, j_1), \cdots, (i_{l-1}, j_{l-1}), (h, j_l), (h+1, t), (i_{l+1}, j_{l+1}), \cdots, (i_k, j_k)\}.
		\end{align*}
		\noindent $(g)$ If there exists $l$ such that $i_{l-1} \neq h, i_{l} = h+1$, then
		\begin{align*}
			\relax E_h\ast [A]_{\Delta} = &\sum\limits_{p \in [1, m], a_{h+1, p} \geq 1} v^{\beta_A(p)}\overline{[a_{h, p}+1, 1]}_v[A+E_{h, p}-E_{h+1, p}]_{\Delta} \\
			& + \sum\limits_{p=j_l, a_{h+1, p} \geq 1} v^{\beta_A(p)-\sum_{j_{l+1}< j \leq p}a_{h+1, j}+1}[A+E_{h, p}-E_{h+1, p}]_{\Delta_{j_l}}\\
			& +\sum\limits_{\substack{p= j_l, a_{h+1, p}\geq 1,\\ j_{l+1}<t< j_l}}v^{\beta_A(p)-\sum_{t<j \leq p}a_{h+1, j}+1}[A+E_{h, p}-E_{h+1, p}]_{\Delta_t},
		\end{align*}
		where
		\begin{align*}
			\Delta_{j_l}&= \{(i_1, j_1), \cdots, (i_{l-1}, j_{l-1}), (h, j_l), (i_{l+1}, j_{l+1}), \cdots, (i_k, j_k)\},\\
			\Delta_t& = \{(i_1, j_1), \cdots, (i_{l-1}, j_{l-1}), (h, j_l), (h+1, t), (i_{l+1}, j_{l+1}), \cdots, (i_k, j_k)\};
		\end{align*}
		\begin{align*}
			\relax F_h\ast [A]_{\Delta} = & \sum\limits_{p \neq j_l, a_{h, p} \geq 1} v^{\beta_A'(p)}\overline{[a_{h+1, p}+1, 1]}_v[A+E_{h+1, p}-E_{h, p}]_{\Delta} \\
			& + \sum\limits_{p=j_l, a_{h, p} \geq 1} v^{\beta_A'(p)}\overline{[a_{h+1, p}, 1]}_v[A+E_{h+1, p}-E_{h, p}]_{\Delta}.
		\end{align*}
		\noindent$(h)$ If there exists $l$ such that $i_l=  h, i_{l+1} = h+1$, then
		\begin{align*}
			& \relax E_h\ast [A]_{\Delta}\\
			= & \sum\limits_{p \notin[j_{l+1}+1, j_l], a_{h+1, p} \geq 1}v^{\beta_A(p)}\overline{[a_{h, p}+1, 1]}_v[A+E_{h, p}-E_{h+1, p}]_{\Delta} \\
			& +  \sum\limits_{j_{l+1}<p<j_{l}, a_{h+1, p} \geq 1}v^{\beta_A(p)-1}\overline{[a_{h, p}+1, 1]}_v[A+E_{h, p}-E_{h+1, p}]_{\Delta} \\
			& +  \sum\limits_{p=j_l, a_{h+1, p} \geq 1} v^{\beta_A(p)-1}\overline{[a_{h, p}, 1]}_v[A+E_{h, p}-E_{h+1, p}]_{\Delta}\\
			& + \sum\limits_{p=j_{l+1}, a_{h+1, p} \geq 1}v^{\beta_A(p)-\sum_{j_{l+2}<j \leq p}a_{h+1, j}+1}(1-v^{-2})\overline{[a_{h, p}+1, 1]}_v[A+E_{h, p}-E_{h+1, p}]_{\Delta_{j_{l+1}}}\\
			& + \sum\limits_{\substack{p=j_{l+1}, a_{h+1, p} \geq 1,\\ j_{l+2}<t<j_{l+1}}} v^{\beta_A(p)-\sum_{t<j \leq p}a_{h+1, j}+1}(1-v^{-2})\overline{[a_{h, p}+1, 1]}_v[A+E_{h, p}-E_{h+1, p}]_{\Delta_t},
		\end{align*}
		where
		\begin{align*}
			\Delta_{j_{l+1}}&= \{(i_1, j_1), \cdots, (i_{l-1}, j_{l-1}), (h, j_l), (i_{l+2}, j_{l+2}), \cdots, (i_k, j_k)\}, \\
			\Delta_t &= \{(i_1, j_1), \cdots, (i_{l-1}, j_{l-1}), (h, j_l), (h+1, t), (i_{l+2}, j_{l+2}), \cdots, (i_k, j_k)\};
		\end{align*}
		\begin{align*}
			& \relax F_h\ast [A]_{\Delta} \\
			=  & \sum\limits_{p \neq j_{l+1}, a_{h, p} \geq 1} v^{\beta_A'(p)}\overline{[a_{h+1, p}+1, 1]}_v[A+E_{h+1, p}-E_{h, p}]_{\Delta} \\
			& +\sum\limits_{p=j_{l+1}, a_{h, p} \geq 1}v^{\beta_A'(p)}\overline{[a_{h+1, p}, 1]}_v[A+E_{h+1, p}-E_{h, p}]_{\Delta} \\
			& + \sum\limits_{p=j_{l}, a_{h, p} \geq 1} v^{\beta_A'(j_{l+1})-\sum_{j_{l+1}\leq j< p}a_{h, j}}(1-v^{-2})\overline{[a_{h+1, j_{l+1}}, 1]}_v[A+E_{h+1, p}-E_{h, p}]_{\Delta_{j_l}} \\ 
			& + \sum\limits_{j_{l+1}<t<j_l, a_{h, t} \geq 1} v^{\beta_A'(j_{l+1})-\sum_{j_{l+1}\leq j < t}a_{h, j}}(1-v^{-2})\overline{[a_{h+1, j_{l+1}}, 1]}_v[A+E_{h+1, t}-E_{h, t}]_{\Delta_t},
		\end{align*}
		where
		\begin{align*}
			\Delta_{j_l} &= \{(i_1, j_1), \cdots, (i_{l-1}, j_{l-1}), (h+1, j_l), (i_{l+2}, j_{l+2}), \cdots, (i_k, j_k)\},\\
			\Delta_t &=\{(i_1, j_1), \cdots, (i_{l-1}, j_{l-1}), (h, j_l), (h+1, t), (i_{l+2}, j_{l+2}), \cdots, (i_k, j_k)\}.
		\end{align*}
	\end{prop}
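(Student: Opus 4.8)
The plan is to read off every coefficient as a $G$-invariant point count coming directly from the convolution definition of the left action. By Proposition \ref{prop gnrt}, each generator is a $\mathbb{K}(v)$-linear combination of basis elements $[B]_\emptyset\in\mathcal{MS}_{n,d}$ whose matrix $B$ differs from a diagonal matrix by at most one simple-root entry $E_{h,h+1}$ or $E_{h+1,h}$ (for $L$, one also has a single decorated term at $(1,1)$). Writing a point of $\mathscr{X}_{n,d}\times\mathscr{X}_{m,d}\times V$ as $(f,g,u)$, the left action is
\[
(\theta\ast\phi)(f,g,u)=\sum_{f''\in\mathscr{X}_{n,d},\,w\in V}\theta(f,f'',w)\,\phi(f'',g,u-w),
\]
so for $\theta=e_{B,\emptyset}$ and $\phi=e_{A,\Delta}$ the value at a representative $(f,g,u)$ of a target orbit $\mathcal{O}_{A',\Delta'}$ equals the number of pairs $(f'',w)$ with $(f,f'',w)\in\mathcal{O}_{B,\emptyset}$ and $(f'',g,u-w)\in\mathcal{O}_{A,\Delta}$. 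This count depends only on the orbit, hence is the structure constant, and to finish I would reinstate the normalization factors relating the $e$- and $[\,\cdot\,]$-bases through $[A]_\Delta=v^{-d(A,\Delta)+r(A,\Delta)}e_{A,\Delta}$.

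For the toral generators $H^{\pm}_r$ the matrix $D$ is diagonal, which forces $f''=f$ and pins down $w$; the count is $1$ and $H^{\pm}_r$ acts by the scalar $v^{\mp\sum_j a_{r,j}}$ determined by the $r$-th row sum, giving (a) at once. The generator $L$ is the mirabolic idempotent, a sum of a diagonal term and a single term decorated at $(1,1)$; its action redistributes $u$ into the first row, so the computation reduces to tracking the position of the image of $u$ relative to the first subspace of the $m$-flag $g$ and recording the resulting decoration at $(1,t)$. The three cases (b), (c), (d) are precisely the alternatives $i_1>1$, $i_1=1$, and $\Delta=\emptyset$ for whether row $1$ already carries a mark; the powers of $v$ come from the change in $d(A,\Delta)$ when a box of row $1$ is marked, and the factors $(1-v^{-2a_{1,j_1}})$ arise from inclusion--exclusion over where the vector lands.

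For $E_h$ and $F_h$ the matrix $B$ is a diagonal matrix plus $E_{h,h+1}$ (resp.\ $E_{h+1,h}$), so any admissible $f''$ agrees with $f$ except that one line is moved between the $h$-th and $(h+1)$-st steps; enumerating such $f''$ compatible with the fixed position of $g$ and $u$ is a Grassmannian/projective-space count over $\mathbb{F}_q$ that produces the Gaussian factor $\overline{[a_{h,p}+1,1]}_v$ and, after normalization, the monomial $v^{\beta_A(p)}$ or $v^{\beta_A'(p)}$, with $p$ the column of the moved box. When neither row $h$ nor $h+1$ is decorated the vector is inert and one recovers the usual type-$A$ $q$-Schur action, which is case (e). The remaining cases (f), (g), (h) correspond to row $h$ decorated, row $h+1$ decorated, or both: moving a box across a decorated row forces the mark to be relocated, producing the auxiliary sets $\Delta_{j_l}$, $\Delta_t$, $\Delta_{j_{l+1}}$ and the corrective exponents $\beta_A(p)-1$, the truncated factors $\overline{[a_{h,p},1]}_v$, and the mirabolic factor $(1-v^{-2})$.

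The main obstacle is the decoration bookkeeping in cases (f)--(h): for each column $p$ where a box can be moved I must determine exactly which cells of the new configuration acquire or lose the mark, and then match the resulting $v$-exponent against the normalization difference $\big(-d(A',\Delta')+r(A',\Delta')\big)-\big(-d(A,\Delta)+r(A,\Delta)\big)$ together with the generator's own shift. Organizing the partial column sums $\beta_A(p)$ and $\beta_A'(p)$ so that these exponents collapse to the stated closed forms, and verifying the inclusion--exclusion that yields the $(1-v^{-2})$ terms when the vector crosses the line being moved, is where essentially all the work lies; the finite-field enumerations themselves are routine.
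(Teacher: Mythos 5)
Your strategy is the right one, but you should know that the paper's own ``proof'' of this proposition is a single sentence citing Propositions 4.1.5--4.1.7 of \cite{FZM}, where these convolution computations were already carried out; what you are proposing is to reconstruct that underlying argument from scratch. Your reconstruction follows exactly the method the paper does spell out for the companion statement, Proposition \ref{prop 2.2.4} (the right action): specialize at $v=\sqrt{q}$, reduce each generator to a basis element $e_{B,\emptyset}$ with $B$ diagonal plus $E_{h,h+1}$ or $E_{h+1,h}$ via Proposition \ref{prop gnrt}, identify the structure constants as cardinalities of sets of intermediate subspaces $T$ (or $U$) subject to incidence conditions with the second flag and a membership condition on the vector $\omega$, and then convert back to the $[\,\cdot\,]$-basis through the normalization $v^{-d(A,\Delta)+r(A,\Delta)}$. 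So the approach is sound and is essentially the approach the source material uses; it is not a different route in any substantive sense.

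The gap is that you have written a plan, not a proof. You yourself flag that ``essentially all the work lies'' in the decoration bookkeeping of cases (f)--(h), and none of that work appears: you do not enumerate, for a fixed column $p$, which sets $\Delta'$ can occur as decorations of the target orbit (in case (h) there are three families, $\Delta^t$, $\Delta_{j_l}$, and a one-parameter family $\Delta_t$ indexed by $j_{l+1}<t<j_l$), you do not compute the cardinalities $\sharp W_p$, $\sharp Z_p$ in each of the subcases $p<i_{l-1}$, $p=i_{l-1}$, $i_{l-1}<p<i_l$, $p=i_l$, $p>i_l$ (these are where the truncated factor $\overline{[a_{h,p},1]}_v$ and the factor $q^{\sum a}-q^{\sum a-1}$, i.e.\ $(1-v^{-2})$, actually come from), and you do not verify that the normalization difference $\bigl(-d(A',\Delta')+r(A',\Delta')\bigr)-\bigl(-d(A,\Delta)+r(A,\Delta)\bigr)$ collapses the raw counts to the stated exponents $\beta_A(p)$, $\beta_A(p)-1$, etc. Since the entire content of the proposition is these closed forms, deferring this verification means the statement is not yet established. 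To complete the argument, either carry out the counts in the style of the proof of Proposition \ref{prop 2.2.4} (the conditions (1)--(4) on $T$ and $U$ there are the template), or reduce to the already-proved formulas of \cite{FZM} by explaining why the computation over $\mathscr{X}_{n,d}\times\mathscr{X}_{m,d}\times V$ is insensitive to replacing the complete flag by an $m$-step flag.
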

	\begin{proof}
		These identities follow from Proposition 4.1.5, Proposition 4.1.6 and Proposition 4.1.7 in \cite{FZM}. 
	\end{proof}
	
	Next we show the right action of $\mathcal{MS}_{m,d}$ on $\mathcal{MV}_{n|m}$.
	\begin{prop}\label{prop 2.2.4}
		Assume $h\in [1, m-1]$ and $r\in [1,m]$. For $(A,\Delta)\in \Xi_{n|m,d}$, where $\Delta = \{(i_1, j_1), \cdots, (i_k, j_k)\}$, let $\xi_A(p)=\sum_{j\leq p}a_{j, h+1}-\sum_{j<p}a_{j, h}$ and $\xi_A'(p)=\sum_{j\geq p}a_{j, h}-\sum_{j>p}a_{j, h+1}$ for $p\in [1,n]$. Then the following identities give the explicit right $\mathcal{MS}_{m,d}$-action. 
		
		\noindent$(a)$ For any $r\in [1,n]$,
		\begin{align*}
			\relax [A]_{\Delta}\ast H^{\pm}_r=v^{\mp\sum_{i\in [1,n]}a_{i,r}}[A]_{\Delta}.
		\end{align*}
		\noindent$(b)$ If $1<j_k$, then 
		\begin{align*}
			\relax [A]_{\Delta}\ast L=v^{-2\sum_{i>i_k}a_{i,1}}([A]_{\Delta}+\sum_{t>i_k, a_{t,1}>0}v^{\sum_{i_k<t\leq t}a_{i,1}}[A]_{\Delta_t})
		\end{align*}
		where 
		\begin{align*}
			\Delta_t=\{(i_1,j_1), \cdots, (i_k, j_k), (t,1)\}.
		\end{align*}
		
		\noindent$(c)$ If $j_k=1$, then
		\begin{align*}
			\relax [A]_{\Delta}\ast L=&v^{-2\sum_{i>i_{k}}a_{i,1}}(\sum_{t>i_{k-1}, a_{t,1}>0}v^{-\sum_{i\leq i_k}a_{i,1}+\sum_{i\leq t}a_{i,1}}(1-v^{-2a_{i_k,1}})[A]_{\Delta_t})\\
			&+v^{-2\sum_{i> i_k}a_{i,1}}(v^{-\sum_{i_{k-1}<\leq i_k}a_{i,1}}(1-v^{-2a_{i_k,1}})[A]_{\Delta\setminus \{(i_k, j_k)\}})
		\end{align*}
		where \begin{align*}
			\Delta_t=\{(i_1,j_1), \cdots, (i_{k-1}, j_{k-1}), (t, j_{k})\}.
		\end{align*}
		
		\noindent$(d)$ If $\Delta=\emptyset$, we have 
		\begin{align*}
			\relax [A]_{\emptyset}\ast L=\sum\limits_{t\in [1,n], a_{t,1}>0} v^{-\sum_{i\leq n}a_{i,1}-\sum_{i>t}a_{i,1}} [A]_{\{(t, 1)\}}+v^{-2\sum_{i\leq n}a_{i, 1}}[A]_{\emptyset}.
		\end{align*}
		
		\noindent $(e)$ For any $t$, $j_t \neq h, h+1$, then
		$$[A]_\Delta \ast E_h= \sum\limits_{p\in[1, n], a_{p, h} \geq 1} v^{\xi_A(p)}\overline{[a_{p, h+1}+1, 1]}_v[A+E_{p, h+1}-E_{p, h}]_{\Delta};$$
		$$[A]_\Delta \ast F_h = \sum\limits_{p\in[1, n], a_{p, h+1} \geq 1}v^{\xi_A'(p)}\overline{[a_{p, h}+1, 1]}_v[A+E_{p, h}-E_{p, h+1}]_{\Delta}.$$
		
		\noindent$(f)$ There exists $l$ such that $j_l = h, j_{l-1} \neq h+1$, then
		\begin{align*}
			\relax [A]_{\Delta}\ast E_h = & \sum\limits_{p \in[i_{l-1}+1, i_l], a_{p, h} \geq 1} v^{\xi_A(p)-1}\overline{[a_{p, h+1}+1, 1]}_v[A+E_{p, h+1}-E_{p, h}]_{\Delta} \\
			& +\sum\limits_{p \notin [i_{l-1}+1, i_l], a_{p, h} \geq 1} v^{\xi_A(p)}\overline{[a_{p, h+1}+1, 1]}_v[A+E_{p, h+1}-E_{p, h}]_{\Delta} \\  
			& + \sum\limits_{p = i_l, a_{p, h} \geq 1}v^{\sum_{i\leq  i_{l-1}}a_{i, h+1}-\sum_{i< i_{l}}a_{i, h}}[A+E_{p, h+1}-E_{p, h}]_{\Delta_{i_l}} \\
			& + \sum\limits_{i_{l-1}<p =s<i_{l}, a_{p, h} \geq 1}v^{\sum_{i \leq i_{l-1}}a_{i, h+1}-\sum_{i< s}a_{i, h}}[A+E_{p, h+1}-E_{p, h}]_{\Delta_s}, 
		\end{align*}
		where
		\begin{align*}
			\Delta_{i_l} &= \{(i_1, j_1), \cdots, (i_{l-1}, j_{l-1}), (i_l, h+1), (i_{l+1}, j_{l+1}), \cdots, (i_k, j_k)\}, \\
			\Delta_s &= \{(i_1, j_1), \cdots, (i_{l-1}, j_{l-1}), (s, h+1), (i_{l}, h), (i_{l+1}, j_{l+1}), \cdots, (i_k, j_k)\};
		\end{align*}
		\begin{align*}
			\relax [A]_{\Delta}\ast F_h = &
			\sum\limits_{p \in [i_{l-1}+1, i_l-1], a_{p, h+1} \geq 1}v^{\xi_A'(p)-1}\overline{[a_{p, h}+1, 1]}_v[A+E_{p, h}-E_{p, h+1}]_{\Delta} \\
			& + \sum\limits_{p = i_l, a_{p, h+1} \geq 1} v^{\xi_A'(p)-1}\overline{[a_{p, h}, 1]}_v[A+E_{p, h}-E_{p, h+1}]_{\Delta}\\
			&+ \sum\limits_{p \notin [i_{l-1}+1, i_l], a_{p, h+1} \geq 1}v^{\xi_A'(p)}\overline{[a_{p, h}+1, 1]}_v[A+E_{p, h,}-E_{p, h+1}]_{\Delta}.
		\end{align*}
		
		\noindent $(g)$ There exists $l$ such that $j_{l+1} \neq h, j_{l} = h+1$, then
		\begin{align*}
			\relax [A]_{\Delta}\ast E_h  = & \sum\limits_{p \neq i_l, a_{p, h} \geq 1} v^{\xi_A(p)}\overline{[a_{p, h+1}+1, 1]}_v[A+E_{p, h+1}-E_{p, h}]_{\Delta} \\
			& + \sum\limits_{p=i_l, a_{p, h} \geq 1} v^{\xi_A(p)}\overline{[a_{p, h+1}, 1]}_v[A+E_{p, h+1}-E_{p, h}]_{\Delta};
		\end{align*}
		\begin{align*}
			\relax [A]_{\Delta}\ast F_h = &\sum\limits_{p \in [1, n], a_{p, h+1} \geq 1} v^{\xi_A'(p)}\overline{[a_{p, h}+1, 1]}_v[A+E_{p, h}-E_{p, h+1}]_{\Delta} \\
			& + \sum\limits_{p=i_l, a_{p, h+1} \geq 1} v^{\xi_A(p)-\sum_{i_{l-1}< i \leq p}a_{i, h+1}+1}[A+E_{p, h}-E_{p, h+1}]_{\Delta_{i_l}}\\
			& +\sum\limits_{p= i_l, a_{p, h+1}\geq 1, i_{l-1}<s< i_l}v^{\xi_A'(p)-\sum_{s<i \leq p}a_{i, h+1}+1}[A+E_{p, h}-E_{p, h+1}]_{\Delta_s},
		\end{align*}
		where
		\begin{align*}
			\Delta_{i_l}&= \{(i_1, j_1), \cdots, (i_{l-1}, j_{l-1}), (i_l, h), (i_{l+1}, j_{l+1}), \cdots, (i_k, j_k)\},\\
			\Delta_s& = \{(i_1, j_1), \cdots, (i_{l-1}, j_{l-1}), (s, h+1), (i_l, h), (i_{l+1}, j_{l+1}), \cdots, (i_k, j_k)\}.
		\end{align*}
		\noindent $(h)$ There exists $l$ such that $j_l= h, j_{l-1} = h+1$, then
		\begin{align*}
			& \relax [A]_{\Delta}\ast E_h\\ 
			= & \sum\limits_{p\neq i_{l-1}, a_{p, h} \geq 1}v^{\xi_A(p)}\overline{[a_{p, h+1}+1, 1]}_v[A+E_{p, h+1}-E_{p, h}]_{\Delta} \\
			& +  \sum\limits_{p=i_{l-1}, a_{p, h} \geq 1}v^{\xi_A(p)}\overline{[a_{p, h+1}, 1]}_v[A+E_{p, h+1}-E_{p, h}]_{\Delta} \\
			& + \sum\limits_{p=i_{l}, a_{p, h} \geq 1} v^{\xi_A(i_{l-1})-\sum_{i_{l-1}\leq j< p}a_{j, h}}(1-v^{-2})\overline{[a_{i_{l-1}, h+1}, 1]}_v[A+E_{p, h+1}-E_{p, h}]_{\Delta_{i_l}} \\ 
			& + \sum\limits_{i_{l-1}<s<i_l, a_{s, h} \geq 1} v^{\xi_A(j_{l-1})-\sum_{i_{l-1}\leq j< s}a_{j, h}}(1-v^{-2})\overline{[a_{i_{l-1}, h+1}, 1]}_v[A+E_{s, h+1}-E_{s, h}]_{\Delta_s},
		\end{align*}
		where
		\begin{align*}
			\Delta_{i_l} &= \{(i_1, j_1), \cdots, (i_{l-2}, j_{l-2}), (i_l, h+1), (i_{l+1}, j_{l+1}), \cdots, (i_k, j_k)\},\\
			\Delta_s &=\{(i_1, j_1), \cdots, (i_{l-2}, j_{l-2}), (s, h+1), (i_l, h), (i_{l+1}, j_{l+1}), \cdots, (i_k, j_k)\};
		\end{align*}
		\begin{align*}
			& \relax [A]_{\Delta}\ast F_h \\
			= & \sum\limits_{p \notin[i_{l-1}+1, i_l], a_{p, h+1} \geq 1}v^{\xi_A'(p)}\overline{[a_{p, h}+1, 1]}_v[A+E_{p, h}-E_{p, h+1}]_{\Delta} \\
			& +  \sum\limits_{i_{l-1}<p<i_{l}, a_{p, h+1} \geq 1}v^{\xi_A'(p)-1}\overline{[a_{p, h}+1, 1]}_v[A+E_{p, h}-E_{p, h+1}]_{\Delta} \\
			& +  \sum\limits_{p=i_l, a_{p, h+1} \geq 1} v^{\xi_A(p)-1}\overline{[a_{p, h}, 1]}_v[A+E_{p, h}-E_{p, h+1}]_{\Delta}\\
			& + \sum\limits_{p=i_{l-1}, a_{p, h+1} \geq 1}v^{\xi_A'(p)-\sum_{i_{l-2}<i \leq p}a_{i, h+1}+1}(1-v^{-2})\overline{[a_{p, h}+1, 1]}_v[A+E_{p, h}-E_{p, h+1}]_{\Delta_{i_{l-1}}}\\
			& + \sum\limits_{\substack{p=i_{l-1}, a_{p, h+1} \geq 1,\\ i_{l-2}<s<i_{l-1}}} v^{\xi_A(p)-\sum_{s<i \leq p}a_{i, h+1}-1}(1-v^{-2})\overline{[a_{p, h}+1, 1]}_v[A+E_{p, h}-E_{p, h+1}]_{\Delta_s},
		\end{align*}
		where
		\begin{align*}
			\Delta_{i_{l-1}} &= \{(i_1, j_1), \cdots, (i_{l-2}, j_{l-2}), (i_l, h), (i_{l+1}, j_{l+1}), \cdots, (i_k, j_k)\}, \\
			\Delta_s &= \{(i_1, j_1), \cdots, (i_{l-2}, j_{l-2}), (s, h+1), (i_l, h), \cdots, (i_k, j_k)\}.\\
		\end{align*}
	\end{prop}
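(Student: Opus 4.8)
The plan is to deduce the right $\mathcal{MS}_{m,d}$-action entirely from the left-action formulas of Proposition \ref{prop 2.2.1}, by exploiting the transpose symmetry that interchanges the two flag factors. First I would set up the transpose apparatus. Sending a decorated matrix $(A,\Delta)\in\Xi_{n|m,d}$ to $(A^{T},\Delta^{T})\in\Xi_{m|n,d}$, where $A^{T}$ is the matrix transpose and $\Delta^{T}=\{(j,i):(i,j)\in\Delta\}$, defines a bijection $\Xi_{n|m,d}\xrightarrow{\sim}\Xi_{m|n,d}$: one checks that the chain conditions $1\le i_{1}<\cdots<i_{k}\le n$, $1\le j_{k}<\cdots<j_{1}\le m$ of Definition \ref{def 3.1.1} are exchanged with the analogous conditions for $\Delta^{T}$. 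Geometrically this is induced by the $G$-equivariant isomorphism $\mathscr{X}_{n,d}\times\mathscr{X}_{m,d}\times V\to\mathscr{X}_{m,d}\times\mathscr{X}_{n,d}\times V$ swapping the two flags, which carries $\mathcal{O}_{A,\Delta}$ onto $\mathcal{O}_{A^{T},\Delta^{T}}$. A short computation shows that the normalizing exponent $-d(A,\Delta)+r(A,\Delta)$ is transpose-invariant: after relabeling, the quadratic term $\sum_{i<k\,\text{or}\,j<l}a_{ij}a_{kl}$ and the linear term $\sum_{\{(i,j)\}\le\Delta}a_{ij}$ are both symmetric under exchanging rows and columns. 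Hence the associated linear isomorphism $T:\mathcal{MV}_{n|m}\to\mathcal{MV}_{m|n}$ satisfies $T([A]_{\Delta})=[A^{T}]_{\Delta^{T}}$.

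Second I would record the behaviour of convolution under transpose. Writing $\tau$ for the map on $\mathcal{MS}_{m,d}$ given on functions by $\tau(s)(f,f',v)=s(f',f,v)$, the change of variable $w\mapsto v-w$ in the convolution sum shows both that $\tau$ is an algebra anti-automorphism and that $T(\phi\ast t)=\tau(t)\ast T(\phi)$ for every $\phi\in\mathcal{MV}_{n|m}$ and $t\in\mathcal{MS}_{m,d}$. Thus $T$ intertwines the right $\mathcal{MS}_{m,d}$-module $\mathcal{MV}_{n|m}$ with the left $\mathcal{MS}_{m,d}$-module $\mathcal{MV}_{m|n}$ twisted by $\tau$. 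On the generators of Proposition \ref{prop gnrt}, since transposing a diagonal matrix changes nothing while transposing $E_{h,h+1}$ yields $E_{h+1,h}$, one reads off $\tau(E_{h})=F_{h}$, $\tau(F_{h})=E_{h}$, $\tau(H_{r}^{\pm})=H_{r}^{\pm}$, and $\tau(L)=L$ (all diagonal matrices and the decoration $\{(1,1)\}$ are transpose-fixed).

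With these ingredients in place, every right-action formula reduces to a known left-action formula on the transpose: $[A]_{\Delta}\ast E_{h}=T^{-1}(F_{h}\ast[A^{T}]_{\Delta^{T}})$, $[A]_{\Delta}\ast F_{h}=T^{-1}(E_{h}\ast[A^{T}]_{\Delta^{T}})$, and likewise $[A]_{\Delta}\ast H_{r}^{\pm}=T^{-1}(H_{r}^{\pm}\ast[A^{T}]_{\Delta^{T}})$ and $[A]_{\Delta}\ast L=T^{-1}(L\ast[A^{T}]_{\Delta^{T}})$. One then applies Proposition \ref{prop 2.2.1} with $n$ and $m$ interchanged to $[A^{T}]_{\Delta^{T}}$ and transposes back. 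The exponents match automatically: directly from the definitions one has $\xi_{A}(p)=\beta'_{A^{T}}(p)$ and $\xi'_{A}(p)=\beta_{A^{T}}(p)$, and the interchange $E\leftrightarrow F$ together with the row–column swap converts each left-hand case into the corresponding right-hand case. In particular cases $(a)$–$(e)$ fall out immediately, as does the sum over columns $\sum_{j}a_{r,j}$ becoming the sum over rows $\sum_{i}a_{i,r}$ in $(a)$, and the first-row conditions of $(b)$–$(d)$ becoming the first-column conditions on $A$.

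The main obstacle I anticipate is the bookkeeping in cases $(f)$–$(h)$, where the decoration meets the relevant indices. Under transpose the hypothesis ``$i_{l}=h$, $i_{l+1}\ne h+1$'' on adjacent rows of $\Delta$ becomes ``$j_{l}=h$, $j_{l-1}\ne h+1$'' on $A$, because the order of the decoration is reversed by transposition (the entry $(i_{l},j_{l})$ with $j_{l}=h$ sits between $j_{l-1}$ and $j_{l+1}$ in the column ordering). Consequently the auxiliary sets $\Delta_{j_{l}},\Delta_{t}$ appearing in Proposition \ref{prop 2.2.1} must be matched term-by-term with the sets $\Delta_{i_{l}},\Delta_{s}$ in the stated formulas, and the shifted summation ranges together with the $(1-v^{-2})$ factors in case $(h)$ must be checked to transport correctly under $T^{-1}$. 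Verifying these correspondences in each subcase is where the bulk of the argument lies; everything else is formal consequence of the anti-automorphism $\tau$ and the intertwiner $T$.
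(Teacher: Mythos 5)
Your central device --- the transpose anti-isomorphism $e_{A,\Delta}\mapsto e_{A^{t},\Delta^{t}}$ together with the intertwiner $[A]_{\Delta}\mapsto[A^{t}]_{\Delta^{t}}$, so that $[A]_{\Delta}\ast E_{h}=(F_{h}\ast[A^{t}]_{\Delta^{t}})^{t}$ --- is exactly the paper's opening move, and your supporting checks are all correct: the exponent $-d(A,\Delta)+r(A,\Delta)$ is transpose-invariant, $\tau(E_{h})=F_{h}$, $\tau(H_{r}^{\pm})=H_{r}^{\pm}$, $\tau(L)=L$, the ordering of the decoration reverses so that the hypothesis of each case $(f)$--$(h)$ transposes to the corresponding case of Proposition \ref{prop 2.2.1}, and $\xi_{A}=\beta'_{A^{t}}$, $\xi'_{A}=\beta_{A^{t}}$. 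Where you diverge is in how the transposed left multiplication is then evaluated: you propose to quote Proposition \ref{prop 2.2.1} with the roles of $n$ and $m$ interchanged and transpose back, whereas the paper re-derives $F_{h}\ast[A^{t}]_{\Delta^{t}}$ from scratch, specializing at $v=\sqrt{q}$ and counting, case by case, the sets $W_{p}$ and $Z_{p}$ of intermediate subspaces compatible with the orbit and with the position of the vector $\omega$; those cardinalities are what produce the $(1-v^{-2})$ factors and the shifted summation ranges in case $(h)$. Your route is shorter and avoids repeating the point count, but it rests on an observation you should make explicit: the formulas of Proposition \ref{prop 2.2.1} nowhere use the standing hypothesis $n\ge m$, so they apply verbatim to the left $\mathcal{MS}_{m,d}$-action on $\mathcal{MV}_{m|n}$. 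Granting that, the remaining work is precisely the term-by-term matching of the auxiliary decorations $\Delta_{j_{l}},\Delta_{t}$ with $\Delta_{i_{l}},\Delta_{s}$ that you flag at the end; the paper's recomputation buys an independent, self-contained verification of those coefficients, while yours buys economy.
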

	
	
	\begin{proof}
		It is sufficient to prove the formulas at the specialization of $v=\sqrt{q}$.
		There is an anti-isomorphism $^t:\mathcal{MS}_{m,d}\to \mathcal{MS}_{m,d}$, sending $e_{A,\Delta}$ to $e_{A^t, \Delta^t}$,  where $A^t$ denotes the transpose matrix and $\Delta^t=\{(j_k, i_k), \cdots (j_1, i_1)\}$, then $(e_{A,\Delta}\ast e_{A^{'}, \Delta^{'}})^t=e_{A^{'t}, \Delta^{'t}} \ast e_{A^t, \Delta^t}$. 
		We prove $(h)$ for the right $ \mathcal{MS}_{m,d}$-action $\Psi$, and other identities are similar.
		\begin{align*}
			[A]_{\Delta}\ast E_h=&(F_h\ast [A^{t}]_{\Delta^t})^t
			= ([C]_{\emptyset}\ast [A^{t}])^{t}\\
			=& (v^{-\sum_{j\leq n}a_{j,h+1}}e_{C,\emptyset}\ast v^{-\sum_{i<k,j\geq l}a_{i,j}a_{k,l}+\sum_{(i,j)\leq \Delta^t}a_{i,j}}e_{A^t,\Delta^t})^{t},
		\end{align*}
		where $(C,\emptyset)\in \Xi_{m|m,d}$ such that $\co(C)=\ro(A^t)$ and $C-E_{h+1,h}$ is a diagonal matrix.
		
		By definition, we have the following identity.
		\begin{align*}
			e_{C, \emptyset}\ast e_{A^t, \Delta^t}=\sum\limits_{p\in [1,n], a_{h,p}>0}\sharp W_p e_{A^t-E_{i,p}+E_{i+1,p},\Delta'},
		\end{align*}
		where $W_p$ is the set of all subspaces $T\subset V$ satisfying the following conditions.
		\begin{enumerate}
			\item $(f,f',\omega)$ is a fixed triple in $\mathscr{X}\times \mathscr{Y}\times V$ lying in the orbit $\mathcal{O}_{A^t-E_{h,p}+E_{h+1,p},\Delta'}$,
			\item $V_{h}\subset T\subset V_{h+1}$ and $\dim{T}=\dim{V_h}+1$,
			\item $ V_h\cap V'_j=T\cap V'_j (j<p), \ V_h\cap V'_{j}\neq T\cap V'_j (j\geq p)$,
			\item $\omega\in \sum_{(i, j)\in \Delta^t} V_{i}\cap V'_{j}\setminus V_{i-1}\cap V_{j}+V_{i}\cap V'_{j-1}$.
		\end{enumerate}
		
		Similarly, we have 
		\begin{align*}
			[A]_{\Delta}\ast F_h=&(E_h\ast [A^{t}]_{\Delta^t})^t
			= ([B]_{\emptyset}\ast [A^{t}])^{t}\\
			=& (v^{-\sum_{j\leq n}a_{j,h}}e_{B,\emptyset}\ast v^{-\sum_{i<k,j\geq l}a_{i,j}a_{k,l}+\sum_{(i,j)\leq \Delta^t}a_{i,j}}e_{A^t,\Delta^t})^{t},
		\end{align*}
		where $(B,\emptyset)\in \Xi_{m|m,d}$ satisfies $\co (B)=\ro(A^t)$ and $B-E_{h,h+1}$ is a diagonal matrix.
		Besides,      
		\begin{align*}
			e_{B, \emptyset}\ast e_{A^t, \Delta^t}=\sum\limits_{p\in [1,n], a_{h+1,p}>0}\sharp Z_p e_{A^t-E_{i,p}+E_{i+1,p},\Delta'},
		\end{align*}
		where $Z_p$ is the set of all subspaces $U\subset V$ satisfying the following conditions. 
		\begin{enumerate}
			\item $(f,f',\omega)$ is a fixed triple in $\mathscr{X}\times \mathscr{Y}\times V$ lying in the orbit $\mathcal{O}_{A^t+E_{h,p}-E_{h+1,p},\Delta'}$,
			\item $V_{h-1}\subset U\subset V_{h}$ and $\dim{U}=\dim{V_h}-1$,
			\item $ V_h\cap V'_j=U\cap V'_j (j<p), \ V_h\cap V'_{j}\neq U\cap V'_j (j\geq p)$,
			\item $\omega\in \sum_{(i, j)\in \Delta^t} V_{i}\cap V'_{j}\setminus V_{i-1}\cap V_{j}+V_{i}\cap V'_{j-1}$.
		\end{enumerate}
		So we only need to compute the cardinalities of sets $\sharp W_p$ and $\sharp Z_p$.
		
		For any $p\in [1, n]$ and $T\in W_p$, $U\in Z_p$, there exist vectors $y_T\in V_{h+1}$ such that $T=V_h\oplus \mathbb{F}_q y_T$, $y_U\in V_h$ such that $V_h=U\oplus \mathbb{F}_q y_U$. 
		Let $\omega=\sum_{(i, j)\in \Delta'} \omega_{i j}$ where $\omega_{i j}\in V_i\cap V'_{j}\setminus ((V_{i-1}\cap V'_{j})+(V_{i}\cap V'_{j-1}))$.
		
		In $(h)$, we have $(h, i_l), (h+1, i_{l-1})\in \Delta^t$.
		For $ e_{C, \emptyset}\ast e_{A^t, \Delta^t}$, except $\Delta'=\Delta^t$ we have different $\Delta'$ when $i_{l-1}<p\leq i_l$, 
		\begin{align*}
			\Delta'& =\Delta^t_{i_l} =\{(j_k,i_k),\cdots, (h+1, i_l), (j_{l-2},i_{l-2}),\cdots, (j_1,i_1) \} \text{ when }p=i_l,\\
			\Delta'& = \Delta^t_s=\{(j_k,i_k),\cdots, (h,i_l), (h+1, s), (j_{l-2},i_{l-2}),\cdots, (j_1,i_1) \} \text{ when } i_{l-1}<p<i_l.
		\end{align*}
		
		When $\Delta'=\Delta^t$, we have
		\begin{align*}
			\sharp W_p= & \sharp\{T\ |\ T\subset V_h+(V_{h+1}\cap V'_p), \omega_{h+1, i_{l-1}}\notin T\cap V'_{i_{l-1}}+V_{h+1}\cap V'_{i_{l-1}-1} \}\\
			& -\sharp\{T\ |\ T\subset V_h+(V_{h+1}\cap V'_{p-1})\}.
		\end{align*}
		
		For $p>i_{l-1}$, since $T\cap V'_{i_{l-1}}=V_h\cap V'_{i_{l-1}}$, we have $w_{h+1,i_{l-1}}\notin T\cap V'_{i_{l-1}}+V_{h+1}\cap V'_{i_{l-1}-1}$. Therefore,
		\begin{align*}
			\sharp W_p= & \sharp\{T\ |\ T\subset V_h+(V_{h+1}\cap V'_p)\}-\sharp\{T\ |\ T\subset V_h+(V_{h+1}\cap V'_{p-1})\}\\
			= & q^{\sum_{i<p}a_{i,h+1}}\frac{q^{a_{p,h+1}+1}-1}{q-1}.
		\end{align*}
		
		For $p<i_{l-1}$, we have $T\cap V'_{i_{l-1}}\neq V_h\cap V'_{i_{l-1}}$ which means $y_T\in V_h\cap V'_{i_{l-1}}$. If $\omega_{h+1, i_{l-1}}\in T\cap V'_{i_{l-1}}+V_{h+1}\cap V'_{i_{l-1}-1}$, then $\omega_{h+1, i_{l-1}}\in V_{h+1}\cap V'_{i_{l-1}-1}$ which contradict to the choice of $\omega_{h+1, i_{l-1}}$. Therefore, $\sharp W_p$ is the same as the case $p>j_{l-1}$. 
		
		For $p=i_{l-1}$, we have 
		\begin{align*}
			\sharp W_p=& \sharp\{T\ |\ T\subset V_h+(V_{h+1}\cap V'_p), \omega_{h+1, i_{l-1}}\notin T\cap V'_{i_{l-1}}+V_{h+1}\cap V'_{i_{l-1}-1} \}\\
			& -\sharp\{T\ |\ T\subset V_h+(V_{h+1}\cap V'_{p-1})\}\\
			=& q^{\sum_{i<p}a_{i,h+1}}\frac{q^{a_{p,h+1}+1}-1}{q-1}-q^{\sum_{i<p}a_{i,h+1}}\\
			=& q^{\sum_{i<p}a_{i,h+1}+1}\frac{q^{a_{p,h+1}}-1}{q-1}.
		\end{align*}
		
		When $\Delta'=\Delta^t_{i_l}$, let $\tilde{W_p}$ be the set of all subspaces $T\subset V$ that satisfies the conditions $(1),(2)$ and $(3)$. For any $T\in \tilde{W}_p$, we have $T\in W_p$ if and only if $y_T-w_{h+1,i_l}\in V_{h+1}\cap V'_{i_{l-1}}\setminus V_{h}\cap V'_{i_{l-1}}+V_{h+1}\cap V'_{i_{l-1}-1}$.
		Therefore, 
		\begin{align*}
			\sharp W_p=q^{\sum_{i\leq i_{l-1}}a_{i,h+1}}-q^{\sum_{i< i_{l-1}}a_{i,h+1}}.
		\end{align*}
		
		When $\Delta'=\Delta^t_{s}$. For any $T\in \tilde{W}_p$, we have $T\in W_p$ if and only if $y_T-w_{h+1,s}\in V_{h+1}\cap V'_{i_{l-1}}\setminus V_{h}\cap V'_{i_{l-1}}+V_{h+1}\cap V'_{i_{l-1}-1}$. So $\sharp W_p$ is the same as the case $\Delta'=\Delta^t_{i_{l}}$.
		
		For $e_{B, \emptyset}\ast e_{A^t, \Delta^t}$ and $p\neq i_{l-1}$, we have $\Delta'=\Delta^t$. For $p=i_{l-1}$ there are other $\Delta'$, 
		\begin{align*}
			\Delta'& =\Delta^t,\\
			\Delta' & =\Delta^t_{i_l} =\Delta^t\setminus \{(h+1, i_{l-1})\}, \\
			\Delta' & =\Delta^t_s= \{(i_k, j_k), \cdots, (h, i_l), (h+1, s), \cdots, (i_1, j_1)\}, i_{l-2}<s<i_{l-1}.
		\end{align*}
		
		Let $\tilde{Z}_p$ be the set consists of subspaces $U\subset V$ which satisfies the conditions $(1),(2)$ and $(3)$. For $\Delta'=\Delta^t$ and any $U\in \tilde{Z}_p$
		we know $U\cap V'_{i_{l-1}}\subset V_h\cap V'_{i_{l-1}}$, so
		$$\omega_{h+1, j_{m+1}}\in V_{h+1}\cap V'_{i_{l-1}}\setminus{(U\cap V'_{i_{l-1}})+(V_{h+1}\cap V'_{i_{l-1}-1}}).$$ 
		Then $U\in Z_p$ if $\omega_{h, i_l} \in U\cap V'_{i_l}$.
		
		For $p>i_l$, we have $ U\cap V'_{i_l}=V_h\cap V'_{i_l}$, therefore 
		\begin{align*}
			\sharp Z_p=& \sharp\{U\ |\ V_{h-1}+(V_{h}\cap V'_{p-1})\subset U\subset V_{h}\}-\sharp\{U\ |\ V_{h-1}+(V_{h}\cap V'_{p})\subset U\subset V_{h}\}\\
			= & q^{\sum_{i>p}a_{i,h}}\frac{q^{a_{p,h}+1}-1}{q-1}.
		\end{align*}
		
		For $p\leq i_{l-1}$, we have $U\cap V'_{i_{l-1}}\neq V_h\cap V'_{i_{l-1}}$ which shows $y_U\in V_{h+1}\cap V'_{i_{l-1}}$. Therefore, $\sharp Z_p$ is the same as the case $p>i_l$.
		
		For $i_{l-1}<p<i_l$, we have 
		\begin{align*}
			\sharp Z_p=& \sharp\{U\ |\ V_{h-1}+(V_{h}\cap V'_{p-1})\subset U\subset V_{h}, w_{h,i_l}\in U\}\\
			&-\sharp\{U\ |\ V_{h-1}+(V_{h}\cap V'_{p})\subset U\subset V_{h}, w_{h, i_l}\in U\}\\
			= & q^{\sum_{i>p}a_{i,h}-1}\frac{q^{a_{p,h}}-1}{q-1}.
		\end{align*}
		
		For $i=i_l$, we have 
		\begin{align*}
			\sharp Z_p=& \sharp\{U\ |\ V_{h-1}+(V_{h}\cap V'_{p-1})\subset U\subset V_{h}, w_{h,i_l}\in U\}\\
			&-\sharp\{U\ |\ V_{h-1}+(V_{h}\cap V'_{p})\subset U\subset V_{h}, w_{h, i_l}\in U\}\\
			= & q^{\sum_{i>p}a_{i,h}}\frac{q^{a_{p,h}}-1}{q-1}.
		\end{align*}
		
		For $\Delta'=\Delta^t_{i_{l-1}}$ and $\Delta=\Delta^t_s$, we have a decomposition $\omega_{h, i_{l}}=\omega_{1}+\omega_{2}$, where $\omega_{1}\in U\cap V'_{i_{l}}$ and $\omega_{2}\in V_{h}\cap V'_{i_{l}}$. Since $p=j_{m+1}$, we see $y_U\in V_{h+1}\cap V'_{i_{l-1}}\setminus U\cap V'_{i_{l-1}}$. Then for any subspace$U\in \tilde{Z}_p$, $U\in Z_p$ if $\omega_{2}\notin U$. So 
		\begin{align*}
			\sharp  Z_p = & \sharp  \{U\ |\ V_{h-1}+(V_{h}\cap V'_{p-1})\subset U \subset V_{h}\}-\sharp \{U\ |\ V_{h-1}+(V_{h}\cap V'_{p})\subset U \subset V_{h}\}\\
			& - \sharp  \{U\ |\ V_{h-1}+(V_{h}\cap V'_{p-1})\subset U \subset V_{h}, \omega_2\in U\}\\
			& + \sharp  \{U\ |\ V_{h-1}+(V_{h}\cap V'_{p})\subset U \subset V_{h}, \omega_2 \in U\}\\
			= & \frac{q^{\sum_{i\geq p}a_{i, h}+1}-q^{\sum_{i>p}a_{h, i}}}{q-1}-\frac{q^{\sum_{i\geq p}a_{i, h}}-q^{\sum_{i>p}a_{i, h}-1}}{q-1}\\
			= & q^{\sum_{i\geq p}a_{i, h}}-q^{\sum_{i>p}a_{i, h}-1}.
		\end{align*}
		Then $(h)$ follows.
	\end{proof}
	By the surjective algebra homomorphisms $\kappa_{n}$ and $\kappa_{m}$, which sending $\mathbf{E}_i\mapsto E_i$, $\mathbf{F}_i\mapsto F_i$, $\mathbf{H}^{\pm}_a\mapsto H^{\pm}_a$ and $\mathbf{L}\mapsto L$, the space $\mathcal{MV}_{n|m}$ is also equipped with a left $\mathbf{MU}_{n}$-action and a right $\mathbf{MU}_{m}$-action. 
	
	\subsection{Mirabolic Howe duality}
	
	Let 
	$$\mathcal{C}_{n,d}=\{\mathbf{a}=(a_1,\cdots, a_n)\in \mathbb{N}^n | \sum^n_{i=1}a_i=d, a_i>0 \text{ for all } i\}.$$ 
	Any $\mathbf{a}$ corresponds to the parabolic subgroup $P^{\mathbf{a}}$ of $G$ consists of block upper triangular matrices with blocks of sizes $(a_1, \cdots, a_n)$. It is clear that $\mathscr{X}_{n,d}\cong \bigsqcup_{\mathbf{a}\in \mathcal{C}_{n,d}}G/P^{\mathbf{a}}$. Let $B$ be the set of all upper triangular matrices in $G$ and $P_n=\bigsqcup_{\mathbf{a}\in \mathcal{C}_{n,d}}P^{\mathbf{a}}$, $P_m=\bigsqcup_{\mathbf{b}\in \mathcal{C}_{m,d}}P^{\mathbf{b}}$.
	
	Let $G'=G\ltimes V=\{ \begin{pmatrix}
		1& 0\\
		v& g
	\end{pmatrix} | g\in G, v\in V\}$ and $\mathbb{K}(v)[G']$ be the group algebra of it. For all $\mathbf{a}\in \mathcal{C}_{n,d}$, group $P^{\mathbf{a}}$ and $B$ can be regard as subgroups of $G'$ by inclusion $x\mapsto (x,0)$, where $x\in B, P^{\mathbf{a}}$. Hence, we can define the following idempotents in $\mathbb{K}(v)[G']$,
	\begin{align*}
		e_B&=\frac{1}{\lvert B \rvert}\sum_{b\in B}b,\\ 
		e_{P^{\mathbf{a}}}&=\frac{1}{\lvert P^{\mathbf{a}} \rvert}\sum_{p\in P^{\mathbf{a}}}p, \text{ for any }\mathbf{a} \in \mathcal{C}_{n,d}.
	\end{align*}
	It is easy to see $e_{P^{\mbf a}}e_B=e_Be_{P^{\mbf a}}=e_{P^{\mbf a}}$ for all $\mathbf{a}\in \mathcal{C}_{n,d}$.
	
	Rosso proved that $\mathcal{MH}\cong e_B\mathbb{K}(v)[G']e_B$ and $\mathcal{MV}_{n,d} \cong \bigoplus_{\mathbf{a}\in \mathcal{C}_{n,d}} e_{P^{\mathbf{a}}}\mathbb{K}(v)[G']e_B$ in \cite{R14} and \cite{R18}. We will use them to prove the mirabolic Howe duality.
	
	\begin{thm}\label{mhd}
		For $n\geq m\geq d$, the actions
		\begin{center}
			\begin{tikzcd}
				\mathbf{MU}_n \arrow[r, "\kappa_n", two heads] & {\mathcal{MS}_{n, d}} \arrow[r, "\Phi"] & \End(\mathcal{MV}_{n|m}) & {\mathcal{MS}_{m, d}} \arrow[l, "\Psi"'] & \mathbf{MU}_m \arrow[l, "\kappa_m"', two heads]
			\end{tikzcd}
		\end{center}
		satisfy double centralizer property
		\begin{align*}
			\Phi \circ \kappa_{n}(\mathbf{MU}_n)\cong \End_{\mathbf{MU}_m}(\mathcal{MV}_{n|m}),\\
			\Psi \circ \kappa_{m}(\mathbf{MU}_m)\cong \End_{\mathbf{MU}_n}(\mathcal{MV}_{n|m}).
		\end{align*}
	\end{thm}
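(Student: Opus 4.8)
The strategy is to reduce the mirabolic Howe duality to the mirabolic Schur-Weyl duality of Theorem~\ref{msw-q}, passing through the group-algebra model supplied by Rosso. The crucial observation is that the same double-centralizer machinery applies once we identify $\mathcal{MV}_{n|m}$ as a suitable idempotent-truncated bimodule over $\mathbb{K}(v)[G']$. Concretely, I would first verify the bimodule identification
\begin{align*}
	\mathcal{MV}_{n|m}\cong \bigoplus_{\mathbf{a}\in \mathcal{C}_{n,d},\ \mathbf{b}\in \mathcal{C}_{m,d}} e_{P^{\mathbf{a}}}\,\mathbb{K}(v)[G']\,e_{P^{\mathbf{b}}},
\end{align*}
which parallels Rosso's description $\mathcal{MV}_{n,d}\cong \bigoplus_{\mathbf{a}} e_{P^{\mathbf{a}}}\mathbb{K}(v)[G']e_B$. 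Under this identification the left $\mathcal{MS}_{n,d}$-action $\Phi$ and the right $\mathcal{MS}_{m,d}$-action $\Psi$ become left multiplication by $P_n$-idempotents and right multiplication by $P_m$-idempotents inside $\mathbb{K}(v)[G']$, since by Proposition~\ref{prop gnrt} the algebras $\mathcal{MS}_{n,d}$ and $\mathcal{MS}_{m,d}$ are generated by operators that act through these idempotents.

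Second, I would set $e_n=\sum_{\mathbf{a}\in\mathcal{C}_{n,d}} e_{P^{\mathbf{a}}}$ and $e_m=\sum_{\mathbf{b}\in\mathcal{C}_{m,d}} e_{P^{\mathbf{b}}}$, so that $\mathcal{MS}_{n,d}\cong e_n\mathbb{K}(v)[G']e_n$, $\mathcal{MS}_{m,d}\cong e_m\mathbb{K}(v)[G']e_m$, and $\mathcal{MV}_{n|m}\cong e_n\mathbb{K}(v)[G']e_m$ as an $(\mathcal{MS}_{n,d},\mathcal{MS}_{m,d})$-bimodule. The double centralizer property then follows from the general idempotent-truncation principle: for an algebra $R$ and idempotents $e,f$, the natural maps
\begin{align*}
	eRe \to \End_{fRf}(eRf),\qquad fRf \to \End_{eRe}(eRf)
\end{align*}
are both isomorphisms provided $ReR=R$ and $RfR=R$, i.e.\ provided the idempotents are \emph{full} in $R$ (equivalently, the two-sided ideals they generate exhaust $R$). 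Thus the technical heart is to establish that, for $n\ge m\ge d$, both $e_n$ and $e_m$ are full idempotents in $\mathbb{K}(v)[G']$; the range condition $m\ge d$ guarantees that the regular (single-column) flag type is available among the $\mathbf{b}\in\mathcal{C}_{m,d}$, so that $e_m$ dominates Rosso's Borel idempotent $e_B$ up to the already-known fullness from the Schur-Weyl setting, and symmetrically for $e_n$.

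Finally, combining the bimodule isomorphism with fullness yields $\Phi(\mathcal{MS}_{n,d})\cong \End_{\mathcal{MS}_{m,d}}(\mathcal{MV}_{n|m})$ and $\Psi(\mathcal{MS}_{m,d})\cong \End_{\mathcal{MS}_{n,d}}(\mathcal{MV}_{n|m})$; precomposing with the surjections $\kappa_n$ and $\kappa_m$ of Theorem~\ref{msw-q}, and noting that $\mathbf{MU}_m$ acts through its image $\Psi\circ\kappa_m(\mathbf{MU}_m)=\Psi(\mathcal{MS}_{m,d})$ (and likewise for $n$), converts these into the desired statements $\Phi\circ\kappa_n(\mathbf{MU}_n)\cong \End_{\mathbf{MU}_m}(\mathcal{MV}_{n|m})$ and the symmetric identity. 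I expect the main obstacle to lie precisely in the fullness verification: one must show that left multiplication by the $P_n$-idempotents generates \emph{all} endomorphisms commuting with the right $\mathcal{MS}_{m,d}$-action, which is where the hypothesis $n\ge m\ge d$ is genuinely used and where the explicit action formulas of Proposition~\ref{prop 2.2.1} and Proposition~\ref{prop 2.2.4} may be needed to rule out any failure of surjectivity in the smallest weight spaces.
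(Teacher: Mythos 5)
Your reduction to the group-algebra model is exactly the paper's first step: the identification $\mathcal{MV}_{n|m}\cong\bigoplus_{\mathbf{a},\mathbf{b}}e_{P^{\mathbf{a}}}\mathbb{K}(v)[G']e_{P^{\mathbf{b}}}$ via double cosets is established there (following Godelle), and the compatibility of $\Phi,\Psi$ with multiplication in $\mathbb{K}(v)[G']$ is as you describe. The gap lies in the mechanism you invoke to extract the double centralizer property from this picture. The idempotents $e_n=\sum_{\mathbf{a}}e_{P^{\mathbf{a}}}$ and $e_m=\sum_{\mathbf{b}}e_{P^{\mathbf{b}}}$ are \emph{not} full in $R=\mathbb{K}(v)[G']$: since $e_{P^{\mathbf{a}}}=e_{P^{\mathbf{a}}}e_B$, any simple $R$-module $S$ with $e_BS=0$ also satisfies $e_nS=0$, and such $S$ exist as soon as $d\geq 2$ (for instance the inflation to $G'=G\ltimes V$ of a cuspidal representation of $G=GL_d(\mathbb{F}_q)$ has no $B$-fixed vector). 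Hence $Re_nR\neq R$, the idempotent-truncation principle in the form you state is not applicable, and the hypothesis $n\geq m\geq d$ cannot rescue fullness because it says nothing about the cuspidal part of $\mathbb{K}(v)[G']$.

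What is actually needed, and what is true, is the weaker statement that $e_n$ and $e_m$ annihilate exactly the same set of simple $R$-modules, namely those killed by $e_B$; for semisimple $R$ this already makes $e_nRe_n\to\End_{e_mRe_m}(e_nRe_m)$ an isomorphism, since that map is precisely the projection killing the blocks $\End(e_nS)$ with $e_mS=0$. The condition $n,m\geq d$ enters here: it guarantees that $B$ occurs among the $P^{\mathbf{a}}$ and among the $P^{\mathbf{b}}$, so the simple constituents of $\bigoplus_{\mathbf{a}}\mathrm{Ind}_{P^{\mathbf{a}}}^{G'}\mathbf{1}$ and of $\bigoplus_{\mathbf{b}}\mathrm{Ind}_{P^{\mathbf{b}}}^{G'}\mathbf{1}$ both coincide with those of $\mathrm{Ind}_{B}^{G'}\mathbf{1}$. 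The paper implements this by routing the truncation through $e_B$ rather than through fullness: it shows $\mathcal{MV}_{n|m}\cong\Hom_{\mathcal{MH}}(\mathcal{MV}_{m,d},\mathcal{MV}_{n,d})\cong\mathcal{MV}_{n,d}\otimes_{\mathcal{MH}}\mathcal{MV}_{m,d}^{*}$, invokes the mirabolic Schur-Weyl duality to write $\mathcal{MV}_{n,d}\cong\bigoplus_iU_i\otimes M_i$ and $\mathcal{MV}_{m,d}^{*}\cong\bigoplus_iM_i^{*}\otimes V_i^{*}$ over a \emph{common} index set of simple $\mathcal{MH}$-modules, deduces $\mathcal{MV}_{n|m}\cong\bigoplus_iU_i\otimes V_i^{*}$, and concludes by semisimplicity of $\mathcal{MH}$ (hence of the Schur algebras) together with Wedderburn-Artin. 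If you replace your fullness claim by this matching-of-simples argument, your outline goes through; the final passage from $\mathcal{MS}$ to $\mathbf{MU}$ via the surjections $\kappa_n,\kappa_m$ is fine as you wrote it.
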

	\begin{proof}
		Let $\mathbb{K}(v)(G')$ be the space of all $\mathbb{K}(v)$-valued functions on $G'$ and $\mathrm{Fun}(P_n\backslash G'/P_m)$ be the subspace of $\mathbb{K}(v)(G')$ consist of the functions that are constant on the double coset classes $P_n\backslash G'/P_m$. By a similar argument in \cite[Proposition 2.11]{G11}, we have $\mrm{Fun}(P_n\backslash G'/P_m)\cong \bigoplus_{\substack{\mathbf{a}\in \mathcal{C}_{n,d}\\\mathbf{b}\in \mathcal{C}_{m,d}}} e_{P^{\mathbf{a}}}\mathbb{K}(v)[G']e_{P^{\mathbf{b}}}$.
		
		Consider the map between the set of double cosets and the set of $G$-orbit of $\mathscr{X}_{n,d}\times \mathscr{X}_{m,d}\times V$, 
		\begin{align*}
			P_n\backslash G'/P_m&\to G\cdot(\mathscr{X}_{n,d}\times \mathscr{X}_{m,d}\times V)\\
			P_n\begin{pmatrix}
				1& 0\\
				v& g
			\end{pmatrix}P_m&\mapsto G\cdot ( P_n, gP_m,v).
		\end{align*}
		It is clear this map is a bijection. Thus we have $\bigoplus_{\substack{\mathbf{a}\in \mathcal{C}_{n,d}\\\mathbf{b}\in \mathcal{C}_{m,d}}} e_{\mathbf{a}}\mathbb{K}(v)[G']e_{\mathbf{b}}\cong \mrm{Fun}(P_n\backslash G'/P_m) \cong \mathcal{MV}_{n|m}$.
		Therefore, we have
		\begin{align*}
			\mathcal{MV}_{n|m} & \cong \bigoplus_{\substack{\mathbf{a}\in \mathcal{C}_{n,d}\\\mathbf{b}\in \mathcal{C}_{m,d}}} e_{P^{\mathbf{a}}}\mathbb{K}(v)[G']e_{P^{\mathbf{b}}}\cong \bigoplus_{\substack{\mathbf{a}\in \mathcal{C}_{n,d}\\\mathbf{b}\in \mathcal{C}_{m,d}}} e_{P^{\mathbf{a}}}e_B\mathbb{K}(v)[G']e_Be_{P^{\mathbf{b}}}\\
			& \cong \Hom_{e_B\mathbb{K}(v)[G']e_B}(\bigoplus_{\mathbf{b}\in \mathcal{C}_{m,d}}e_B\mathbb{K}(v)[G']e_{P^{\mathbf{b}}}, \bigoplus_{\mathbf{a}\in \mathcal{C}_{n,d}} e_{P^{\mathbf{a}}}\mathbb{K}(v)[G']e_B)\\
			& \cong \Hom_{\mathcal{MH}}(\mathcal{MV}_{m, d}, \mathcal{MV}_{n, d})\\
			& \cong \mathcal{MV}_{n, d} \otimes_{\mathcal{MH}} \mathcal{MV}_{m, d}^{*}.
		\end{align*}
		
		Due to the mirabolic Schur-Weyl duality \cite[Theorem 6.1.6]{FZM}, we have the following decompositions
		$$
		\mathcal{MV}_{n, d}\cong \bigoplus_{i} U_i\otimes M_i, \quad \mathcal{MV}_{m, d}^{*}\cong \bigoplus_{i} M_i^*\otimes V_i^*
		$$
		where $U_i$ and $V_i$ runs over all left simple module of $\mathcal{MS}_{n,d}$ and $\mathcal{MS}_{m, d}$, respectively, and $M_i's$ are certain right simple modules of $\mathcal{MH}$ up to an isomorphism. Therefore,
		\begin{align*}
			\mathcal{MV}_{n|m} & \cong \mathcal{MV}_{n, d} \otimes_{\mathcal{MH}} \mathcal{MV}_{m, d}^{*}\\
			& \cong \bigoplus_{i} U_i\otimes M_i \bigotimes_{\mathcal{MH}} \bigoplus_{i} M_i^*\otimes V_i^*\\
			& \cong \bigoplus_{i} U_i\otimes V_i^{*}.
		\end{align*}
		Then we have 
		\begin{align*}
			\End_{\mathcal{MS}_{m, d}}(\mathcal{MV}_{n|m})& \cong  \End_{\mathcal{MS}_{m, d}}(\bigoplus_{i} U_i\otimes V_i^{*})\\
			& \cong \bigoplus_{i} \End(U_i)\otimes \textrm{id}_{V^*_i}\cong \bigoplus_{i}\End(U_i).
		\end{align*}
		
		By \cite{R14}, the mirabolic Hecke algebra $\mathcal{MH}$ is a semisimple algebra. Due to Theorem \ref{msw-s}, the mirabolic quantum Schur algebra $\mathcal{MS}_{n,d}$ is also a semisimple algebra, so are the quotient $\Phi(\mathcal{MS}_{n,d})$. By Wedderburn-Artin Theorem, we have $\Phi(\mathcal{MS}_{n,d})\cong \bigoplus_{i}\End(U_i)$, then $$\Phi( \mathcal{MS}_{n,d})\cong \End_{\mathcal{MS}_{m,d}}(\mathcal{MV}_{n|m}).$$
		
		The proof for $\Psi(\mathcal{MS}_{m,d})$ is similar and the theorem follows. Then the statement in the theorem hold.
	\end{proof}

	\bibliographystyle{alpha}
	
\end{document}